\newtheorem{theorem}{Theorem}[section]
\newtheorem{conjecture}[theorem]{Conjecture}
\newtheorem{definition'}[theorem]{`Definition'}
\newtheorem{lemma}[theorem]{Lemma}
\newtheorem{proposition}[theorem]{Proposition}
\newtheorem{example}[theorem]{Example}
\newcommand{\bb}[1]{\mathbb{#1}}
\newcommand{\mc}[1]{\mathcal{#1}}
\newcommand{\mf}[1]{\mathfrak{#1}}
\begin{document}

\title[Lusztig-Vogan Bijection]{Some Calculations of the Lusztig-Vogan Bijection for Classical Nilpotent Orbits}
\author{Kayue Daniel Wong}
%\address{Department of Mathematics, Hong Kong University of Science and
%Technology, Clear Water Bay, Kowloon, Hong Kong}
%\email{makywong@ust.hk}
%\subjclass[2010]{17B08, 17B10}

\begin{abstract}
In this manuscript, we compute explicitly the Lusztig-Vogan bijection for local systems of some classical, special, nilpotent orbits. Using these results, we prove a conjecture of Achar and Sommers on regular functions of some covers of classical nilpotent orbits.
\end{abstract}
\maketitle
%%%%%%%%%%%%%%%%
%%%%%%%%%%%%%%%%
%%%SECTION 1%%%%
%%%%%%%%%%%%%%%%
%%%%%%%%%%%%%%%%
\section{Introduction}
\subsection{Unipotent Representations and Quantization}
Let $G$ be a complex simple Lie group. In \cite{BV1985}, Barbasch and Vogan studied the \textbf{special unipotent representations} of $G$, which are of utmost interest in various aspects of representation theory. For instance, they are related to Arthur's packet of automorphic forms, and are conjectured to be unitary. More specifically, they are also conjectured to be `building blocks' of the unitary dual of $G$. Indeed, in \cite{B1989}, Barbasch generalized the idea of special unipotent representations to \textbf{unipotent representations}, which are used to classify the unitary dual of classical Lie groups.

Another interesting application of special unipotent representations is their relations with the Orbit Method, first introduced by A.A. Kirillov. Roughly speaking, for any (co)adjoint orbit $\mathcal{O}$ of a Lie algebra $\mf{g}$, one would like to `attach' a (preferably unitary) representation to $\mathcal{O}$. This idea is pursued nicely when $\mf{g}$ is a nilpotent or solvable Lie algebra, but several difficulties came up when $\mf{g}$ is semisimple. In the context of nilpotent coadjoint orbits in a semisimple Lie algebra $\mf{g}$, the idea of Orbit Method suggests the following:
\begin{conjecture} \label{conj:orbitmethod}
Let $\mc{O}$ be a nilpotent orbit in $\mf{g}$, and $R(\mc{O})$ be the ring of regular function of $\mc{O}$, then there is a (not necessarily unique) $(\mf{g}_{\bb{C}},K_{\bb{C}})$-module $Q$ such that
$$Q|_{K_{\bb{C}}} \cong R(\mc{O}).$$
(note that $K \leq G$ is the maximal compact subgroup of $G$, hence its complexification $K_{\bb{C}}$ is isomorphic to $G$). More generally, let $e \in \mc{O}$ and $G_e$ be the isotropy group of $e$ with connected component $(G_e)^0$. Then for any irreducible representation $\rho$ of the component group $A(\mc{O}) := G_e/(G_e)^0$, there exists a $(\mf{g}_{\bb{C}},K_{\bb{C}})$-module $Q_{\rho}$ such that
$$Q_{\rho}|_{K_{\bb{C}}} \cong R(\mc{O}, \rho) = Ind_{G^e}^{G}(\rho),$$
where $R(\mc{O}, \rho)$ is the global section of the $G$-equivariant bundle $G {\times}_{G_e} V_{\rho} \to G/G^e \cong \mc{O}$. In particular, when $\rho = \mathrm{triv}$ is the trivial representation, then $R(\mc{O}, \mathrm{triv}) = R(\mc{O})$.
\end{conjecture}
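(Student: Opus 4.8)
The plan is to attack Conjecture~\ref{conj:orbitmethod} not in full generality but orbit-by-orbit for the special nilpotent orbits of classical type, using the Lusztig--Vogan bijection as the bridge between the geometry of $\mc{O}$ and the representations of $G$. First I would make the bijection $\gamma:(\mc{O},\rho)\mapsto \lambda(\mc{O},\rho)$ completely explicit in this range: running Achar's recursive algorithm reduces $\gamma(\mc{O},\rho)$ to data attached to strictly smaller orbits inside a Levi subalgebra, and the goal is to unwind this recursion into a closed combinatorial formula in terms of the partition labelling $\mc{O}$ and the character $\rho$ of the (elementary abelian $2$-group) component group $A(\mc{O})$. This step is purely combinatorial and is where the bulk of the labour lies.

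Second, with $\lambda(\mc{O},\rho)$ in hand, I would exhibit a candidate module $Q_\rho$. The natural choices are the unipotent representations: for $\rho=\mathrm{triv}$ and $\mc{O}$ special, the Barbasch--Vogan special unipotent representation attached to the Lusztig--Spaltenstein dual of $\mc{O}$; for general $\rho$, Barbasch's unipotent representations (or, in type $A$, suitable unitarily cohomologically induced modules) whose annihilator is a maximal primitive ideal with associated variety $\overline{\mc{O}}$ and whose lowest $K_{\bb{C}}$-type is $\lambda(\mc{O},\rho)$. One has to check such a $Q_\rho$ exists, is irreducible, and carries the correct infinitesimal character; here one leans on the classification of unipotent representations together with vanishing theorems for cohomological induction. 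In parallel I would compute the left-hand side $R(\mc{O},\rho)$ directly, via Kraft--Procesi and Brylinski-type descriptions of rings of functions on classical nilpotent orbits and their covers, which realize $R(\mc{O},\rho)$ as (a summand of) global sections of a homogeneous bundle over a partial flag variety.

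The decisive step is then to verify $Q_\rho|_{K_{\bb{C}}}\cong R(\mc{O},\rho)$ as graded $K_{\bb{C}}$-modules, and the hard part will be doing this \emph{on the nose} rather than up to leading term. Both sides share the associated variety $\overline{\mc{O}}$, the same multiplicity along $\mc{O}$ (extracted from the Lusztig--Vogan data), and the same bottom layer $\lambda(\mc{O},\rho)$; the content of the conjecture is precisely that these invariants determine the entire $K_{\bb{C}}$-spectrum in every degree. Matching all multiplicities in all degrees is a delicate branching computation, and it is made worse by the frequent failure of normality of $\overline{\mc{O}}$ for classical orbits (so that $R(\mc{O})$ genuinely differs from $R(\overline{\mc{O}})$) and, for non-trivial $\rho$, by the need to control the non-trivial equivariant bundle $G\times_{G_e}V_\rho$. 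When a direct Hilbert-series comparison is not feasible, the fallback is to identify $Q_\rho$ and $R(\mc{O},\rho)$ with a common associated-graded object via a degeneration argument, reducing the problem to the already-understood case of $\overline{\mc{O}}$ (or of a cover thereof).
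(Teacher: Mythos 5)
The statement you are trying to prove is Conjecture~\ref{conj:orbitmethod}, which the paper itself presents as an \emph{open conjecture} and never proves. What the paper actually establishes is the narrower statement in Conjecture~\ref{conj:unip} (Vogan's formulation in terms of special unipotent representations and $\widehat{\overline{A}(\mc{O})}$), and only for special classical orbits (Theorem~A). Even there, the quantizing modules are the $X_{\mc{O},\pi}$, so the $\rho\in\widehat{A(\mc{O})}$ that get realized are only those of the form $\pi\boxtimes\mathrm{triv}^x$; non-special orbits, exceptional types, and the remaining representations of $A(\mc{O})$ are left untouched. Your proposal, while interesting, also silently narrows to special classical orbits, so it is not a proof of the conjecture either.

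Beyond scope, there are two substantive issues. First, your logical flow is the reverse of the paper's. You propose to compute the Lusztig--Vogan bijection $\gamma(\mc{O},\rho)$ first (via Achar's recursion) and then hunt for a module whose lowest $K_{\bb{C}}$-type, associated variety, and multiplicity match. The paper goes the other way: it starts from Barbasch's already-proved identification $X_{\mc{O},\pi}\cong R(\mc{O},\pi)$ for orbits with $A(\mc{O})=\overline{A}(\mc{O})$ (Theorem~\ref{thm:unipclassical}, whose proof lives in \cite{B2} and uses dual-pair and primitive-ideal techniques, not LV combinatorics), extends it by induction in stages to all special classical orbits (Theorem~A), and only \emph{then} reads off the LV map $\Psi=\Gamma$ from the resulting character formulas. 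In other words, the LV bijection is an output here, not an input. Your plan to first nail down $\gamma$ through Achar's algorithm is moot for the paper's argument and, outside type $A$, Achar's closed-form combinatorics are not available in the literature in the generality you need.

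Second, and more seriously, your third step is where the conjecture actually lives and your proposal does not close it. You correctly observe that matching associated variety, multiplicity, and lowest $K_{\bb{C}}$-type does not pin down the entire graded $K_{\bb{C}}$-spectrum; you then defer to either a ``delicate branching computation'' or an unspecified ``degeneration argument.'' Neither is carried out, and neither is known to work in general: $\overline{\mc{O}}$ can fail to be normal for classical orbits, $R(\mc{O},\rho)$ for nontrivial $\rho$ is not the coordinate ring of an affine variety, and no general mechanism identifies a primitive-ideal quotient with the sections of $G\times_{G_e}V_\rho$ in every degree. The paper avoids this chasm entirely by leaning on Barbasch's Theorem~4.10.1, which is itself a hard theorem proved by completely different methods; your proposal would in effect have to reprove it. As written, your attempt identifies the difficulty accurately but does not resolve it, so it is a research outline rather than a proof.
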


To relate the above conjecture with unipotent representations, recall in \cite{BV1985} that all special unipotent representations of $\mf{g}$ are parametrized by the set
$$\mc{N}_{o,\widehat{\overline{a}}} := \{(\mc{O}, \pi) \ |\ \mc{O} \text{ is a special nilpotent orbit, } \pi \text{ is an irreducible representation of } \overline{A}(\mc{O}) \},$$
where $\overline{A}(\mc{O})$ is the \textbf{Lusztig quotient} of the component group $A(\mc{O})$ \cite[Section 4]{BV1985}. For each $(\mc{O},\pi) \in \mc{N}_{o,\widehat{\overline{a}}}$, we write $X_{\mc{O},\pi}$ be its corresponding special unipotent representation. Then we can state a conjecture of Vogan in the context of complex semisimple Lie groups:
\begin{conjecture}[\cite{V3}, Conjecture 12.1] \label{conj:unip}
Suppose $\mc{O}$ is a special nilpotent orbit. For every irreducible representation $\pi$ of $\overline{A}(\mc{O})$, there exists an irreducible representation $\rho$ of $A(\mc{O})$ such that
$$X_{\mc{O}, \pi}|_{K_{\bb{C}}} \cong R(\mc{O}, \rho) = Ind_{G^e}^{G}(\rho).$$
\end{conjecture}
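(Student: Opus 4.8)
The plan is to verify the conjecture \emph{case by case} for the classical special nilpotent orbits under consideration, by computing both sides of the claimed isomorphism explicitly as $G \cong K_{\bb{C}}$-modules and exhibiting the matching $\pi \leftrightarrow \rho$. Since each side is a $K_{\bb{C}}$-module whose character is determined by finitely much data in every bounded region of the weight lattice, it suffices to prove an identity of $G$-characters. I would organise the whole argument around the Lusztig--Vogan bijection $\Gamma \colon (\mc{O},\rho) \mapsto \lambda(\mc{O},\rho)$: its known existence and its compatibility with the closure order on nilpotent orbits reduce the computation of the $G$-module $R(\mc{O},\rho)$ to the combinatorial package $\{\lambda(\mc{O}',\rho')\}$, where $\mc{O}'$ runs over all orbits with $\overline{\mc{O}'} \subseteq \overline{\mc{O}}$ and $\rho'$ over the $G$-equivariant local systems on $\mc{O}'$.

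First I would compute the right-hand side. Writing $R(\mc{O},\rho) = \bigoplus_{\lambda} V_\lambda \otimes \mathrm{Hom}_{G^e}(V_\lambda|_{G^e}, \rho)$, the problem becomes the determination of the multiplicities $m_\lambda(\mc{O},\rho)$. I would evaluate the explicit combinatorial Lusztig--Vogan algorithm in classical types on the relevant partitions and bipartitions, and then induct down the closure order: knowing $\lambda(\mc{O},\rho)$ and the modules $R(\mc{O}',\rho')$ for all $\overline{\mc{O}'} \subsetneq \overline{\mc{O}}$, one strips off the leading term $V_{\lambda(\mc{O},\rho)}$ (which occurs with multiplicity one and is minimal in the relevant order) and identifies the remainder with a sum of sections of bundles on strictly smaller orbits. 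Carried out for the families in question, this yields a closed $G$-module formula for $R(\mc{O},\rho)$.

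Second I would compute the left-hand side. For complex $G$ the special unipotent representation $X_{\mc{O},\pi}$ is realised as a Langlands quotient attached through Barbasch--Vogan duality to a distinguished semisimple element, and its restriction to $K_{\bb{C}}$ is governed by its associated variety $\overline{\mc{O}}$ together with its underlying $W$-representation, which is the Springer representation corresponding to $(\mc{O},\pi)$ via the Lusztig quotient $\overline{A}(\mc{O})$. From the known character formulas for unipotent representations in classical types (and Kostant's theorem on harmonic polynomials for the leading $K_{\bb{C}}$-type) I would extract an explicit $G$-decomposition of $X_{\mc{O},\pi}$. The correspondence $\pi \mapsto \rho$ is then essentially forced: $\pi$ is a representation of the quotient $\overline{A}(\mc{O})$ of $A(\mc{O})$, and one selects the $\rho$ on $A(\mc{O})$ whose Lusztig--Vogan weight $\lambda(\mc{O},\rho)$ is the lowest $K_{\bb{C}}$-type of $X_{\mc{O},\pi}$; it then remains to check that the two explicit decompositions coincide.

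The main obstacle is the explicit evaluation of $\Gamma$ on local systems of classical orbits and the bookkeeping of the resulting induction: the closure order on nilpotent orbits is intricate, non-special orbits and local systems outside the image of the Lusztig quotient necessarily appear as intermediate terms, and matching the \emph{full} (not merely the leading) $G$-character of $X_{\mc{O},\pi}$ with the inductively constructed $R(\mc{O},\rho)$ comes down in the end to a nontrivial identity of symmetric functions, equivalently of virtual $W$-characters. A secondary difficulty is keeping the three component-group normalisations straight --- $A(\mc{O})$, its Lusztig quotient $\overline{A}(\mc{O})$, and the fundamental groups of the relevant covers --- so that the assignment $\pi \leftrightarrow \rho$ is stated correctly and uniformly across types $B$, $C$ and $D$.
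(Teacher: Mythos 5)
Your plan diverges substantially from the paper's argument, and as written it contains a circularity that would prevent it from closing. You propose to compute $R(\mc{O},\rho)$ from ``the explicit combinatorial Lusztig--Vogan algorithm in classical types,'' inducting down the closure order, and to then match the result against the $K_{\bb{C}}$-decomposition of $X_{\mc{O},\pi}$. But at the level of generality of the conjecture, no such explicit algorithm exists outside type $A$ (Achar's work); indeed, one of the points of this manuscript is to produce exactly that explicit data for special classical orbits and local systems factoring through $\overline{A}(\mc{O})$ (Theorem B), and the paper derives it \emph{from} Theorem A, not the other way around. Using the explicit Lusztig--Vogan combinatorics as an input therefore presupposes the thing being proved. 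Moreover, inducting down the closure order requires knowing $R(\mc{O}',\rho')$ for all smaller orbits $\mc{O}'$ and \emph{all} local systems $\rho' \in \widehat{G^{e'}}$, including non-special orbits and local systems not arising from the Lusztig quotient --- data far beyond what the conjecture, or the paper, touches. Your final sentence acknowledges that the match reduces to ``a nontrivial identity of symmetric functions''; that is the whole difficulty, and the proposal offers no mechanism to establish it.

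The paper's proof follows a completely different route that avoids any global character computation. It starts from Barbasch's Theorem \ref{thm:unipclassical}, which already settles the case $A(\mc{O}) = \overline{A}(\mc{O})$. For orbits with extra $\mu$-columns it writes $\mc{O} = \mc{O}'' \cup (\mu_1,\mu_1,\dots,\mu_x,\mu_x)\cup\phi$ as a parabolically induced orbit, proves a lemma for the triangular orbit $\mc{O}^+ = \mc{O}^\Delta \cup (\mu_1,\dots,\mu_x)\cup\phi$ identifying $X_{\mc{O}^+,\pi}$ with $R(\mc{O}^+,\pi\boxtimes\mathrm{triv}^x)$, and then invokes Vogan's a priori relation $X_{\mc{O},\pi}|_{K_{\bb{C}}} = R(\mc{O},\rho_\pi) - Y_\pi$ together with Barbasch's Proposition 4.5.1 and the linear independence of the $R(\mc{Q},\sigma)$'s to force $Y_\pi = 0$, $Z_\pi = 0$, and ultimately $\rho_\pi = \pi\boxtimes\mathrm{triv}^x$. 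These structural ingredients --- the prior Barbasch theorem, the triangular-orbit lemma, Vogan's $X = R - Y$ decomposition, and the linear-independence argument --- are what make the proof finite; none of them appears in your proposal. You should replace the brute-force matching strategy with this reduction-to-known-cases argument, or at minimum explain how you would obtain the classical Lusztig--Vogan data without already knowing Theorem A.
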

For classical nilpotent orbits, Barbasch showed the following:
\begin{theorem}[\cite{B2}, Theorem 4.10.1] \label{thm:unipclassical}
Let $G$ be a complex simple Lie group of classical type. Then Conjecture \ref{conj:unip} holds for all special orbits satisfying $A(\mc{O}) = \overline{A}(\mc{O})$.
\end{theorem}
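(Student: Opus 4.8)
The plan is to translate the assertion into a purely $G$-module identity and then establish it by reducing both sides, via parabolic and geometric induction, to a short list of ``elementary'' building blocks.

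\emph{Reformulation.} Since $K_{\bb{C}} \cong G$, Conjecture \ref{conj:unip} for a pair $(\mc{O},\pi)$ asks for $\rho \in \widehat{A(\mc{O})}$ with
\[
[X_{\mc{O},\pi} : V_\mu] = \dim \mathrm{Hom}_{G^e}(V_\mu,\rho) \qquad \text{for all } \mu \in \widehat{G};
\]
the right-hand side equals $[R(\mc{O},\rho) : V_\mu]$ by Frobenius reciprocity, since $R(\mc{O},\rho) \cong \bigoplus_\mu V_\mu \otimes \mathrm{Hom}_{G^e}(V_\mu,\rho)$ as a $G$-module. So one must compute the $K_{\bb{C}}$-spectrum of the special unipotent representation on the left and the $G$-module structure of the induced bundle on the right, and match them.

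\emph{The unipotent side.} For $G$ complex classical, $X_{\mc{O},\pi}$ is the unitary $(\mf{g}_{\bb{C}},K_{\bb{C}})$-module with infinitesimal character determined by the Barbasch--Vogan duality and with annihilator and lowest $K$-type governed by $\pi$. Using the classification of the unitary dual (equivalently, of unipotent representations) of $G$, one writes $\mc{O} = \mathrm{Ind}_L^G \mc{O}_L$ for a suitable Levi subgroup $L$ and a ``small'' special orbit $\mc{O}_L$ in $\mf{l}$ whose unipotent representations are known building blocks (spherical unipotent representations, oscillator-type representations of symplectic factors, and so on), and realizes $X_{\mc{O},\pi}$ as the Langlands quotient of $\mathrm{Ind}_P^G X_{\mc{O}_L,\pi_L}$. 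One then computes the $K_{\bb{C}}$-spectrum by induction in stages, $\mathrm{Ind}_P^G X_{\mc{O}_L,\pi_L}\big|_{K_{\bb{C}}} \cong \mathrm{Ind}_{L_{\bb{C}}}^{G}\big(X_{\mc{O}_L,\pi_L}\big|_{L_{\bb{C}}}\big)$, and shows that in the unipotent case the relevant intertwining operators degenerate so that the $K_{\bb{C}}$-spectrum of the irreducible quotient is a clean, Weyl-group-controlled piece of the induced one; concretely, $[X_{\mc{O},\pi}:V_\mu]$ is expressed through the $W$-representation attached to $(\mc{O},\pi)$ by the (generalized) Springer correspondence.

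\emph{The geometric side and the matching.} For $G$ classical, the relevant orbit closures and the covers needed to compute $R(\mc{O},\rho)$ have rational singularities, so $R(\mc{O},\rho)$ is obtained from $R(\mc{O}_L,\rho_L)$ by a geometric induction procedure (Borho--Kraft, Broer), reducing the right-hand side to the same $L$ and $\mc{O}_L$. One is thereby reduced to checking $X_{\mc{O}_L,\pi_L}\big|_{L_{\bb{C}}} \cong R(\mc{O}_L,\rho_L)$ for the building blocks---for instance the oscillator representation of $Sp(2n,\bb{C})$ against the minimal orbit and its double cover, and spherical unipotent representations against $R(\mc{O})$---where it follows from a direct comparison of $W$-characters via the Springer correspondence and the known $G$-module description of $R(\mc{O})$.

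\emph{Main obstacle.} The heart of the argument is the bookkeeping that makes the two inductions land on compatible data. First, one must control how $A(\mc{O})$ and the Lusztig quotient $\overline{A}(\mc{O})$ transform under induction from $L$; this is exactly where the hypothesis $A(\mc{O}) = \overline{A}(\mc{O})$ is used, ensuring that each $\pi \in \widehat{\overline{A}(\mc{O})}$ lifts unambiguously and that the matching local system $\rho$ is well defined and is preserved, compatibly, by the geometric induction (when $A(\mc{O}) \neq \overline{A}(\mc{O})$ the correspondence between the two parametrizations is far less transparent). Second, the building-block identity ultimately reduces to a nontrivial equality of Weyl group representations---the one produced on the unipotent side by the Barbasch--Vogan asymptotic-cycle construction, and the one governing $R(\mc{O}_L)$ through Springer/Borho--MacPherson theory---whose verification, uniformly across the four classical series and all special orbits and via the compatibility of $j$-induction with both constructions, is the main technical task.
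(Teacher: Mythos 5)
The paper does not prove this statement; it is cited verbatim as Theorem~4.10.1 of Barbasch \cite{B2}, and the only in-paper justification is the short remark at the end of Section~\ref{subsec:generators}, which is a notational matching: Lusztig's Theorem~0.4 together with \cite[Props.~4.14--4.16]{BV1985} identify the labels $\pi \in \widehat{\overline{A}(\mc{O})}$ with Barbasch's symbols $\pi((a_{2q-1}'')_{\gamma_q}\cdots)$, and \cite[Section~5.3]{B2} identifies the latter with $R(\mc{O},\varphi)$. So the ``proof'' in this paper is a citation, not an argument.

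Your blind sketch reconstructs, at a high level, the kind of argument one would expect in \cite{B2}: express both $X_{\mc{O},\pi}\big|_{K_{\bb C}}$ and $R(\mc{O},\rho)$ through induction from Levi data and match building blocks. That is not wrong in spirit, but it does not match the mechanism that actually appears (and that the present paper imitates in its proof of Theorem~A, Section~\ref{subsec:generalize}). There, the key steps are: (i) Vogan's result that $X_{\mc{O},\pi} = R(\mc{O},\rho_\pi) - Y_\pi$ with $Y_\pi$ a genuine module supported strictly inside $\overline{\mc{O}}$; (ii) induction of both sides to a larger even orbit $\mc{O}^+$ built from a triangular orbit, where $X_{\mc{O}^+,\pi^+}$ is computable by induction in stages and, crucially, equal to $R(\mc{O}^+,\pi^+\boxtimes\mathrm{triv}^x)$ via \cite{S3}; and (iii) the \emph{linear independence} of $\{R(\mc{Q},\sigma)\}$ from \cite{V1998}, which forces $Ind(Y_\pi)=0$ and then pins down $\rho_\pi$ uniquely. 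Your sketch instead relies on two assertions that carry the entire weight of the proof but are left unsupported: that ``intertwining operators degenerate so that the $K_{\bb C}$-spectrum of the irreducible quotient is a clean, Weyl-group-controlled piece of the induced one,'' and that the building-block equality ``reduces to a nontrivial equality of Weyl group representations'' coming from Springer theory. Neither of these is how \cite{B2} closes the argument, and without a concrete mechanism to identify the Langlands quotient's $K$-spectrum and to kill the boundary contributions, the sketch has a genuine gap precisely where the actual proof has its teeth. Finally, note that invoking the classification of the unitary dual essentially presupposes most of the content of \cite{B2}, so it cannot be used as an independent input.
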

More precisely, Barbasch showed there is a one-to-one correspondence between the $R(\mc{O},\rho)$'s and $X_{\mc{O},\pi}$'s for special classical orbits satisfying $A(\mc{O}) = \overline{A}(\mc{O})$. We will see in Section \ref{subsec:generators} that the correspondence is trivial, i.e. $\rho = \pi$ as representations of $A(\mc{O}) = \overline{A}(\mc{O})$. Moreover, it is not hard to extend Theorem \ref{thm:unipclassical} to all classical special orbits without the condition on component groups. In other words:\\

\noindent{\bf Theorem A} (Section \ref{subsec:generalize}).
{\it Conjecture \ref{conj:unip} holds for all special classical orbits.}\\

For exceptional nilpotent orbits, the results in \cite{McG2} verified Conjecture \ref{conj:unip} for $G_2$. However, it is left unproved for other exceptional groups. In an upcoming work, the author will study the conjecture for exceptional Richardson orbits.

\subsection{The Lusztig-Vogan Bijection}
We now focus on another conjecture of Vogan, which is related to $R(\mc{O}, \sigma) \cong Ind_{G^e}^{G}(\sigma)$ for all possible irreducible representations $\sigma \in \widehat{G^e}$. Write $\Lambda^+(G) \subset \mathfrak{t}^*$ as the collection of highest dominant weights of finite dimensional representations of $G$. By Theorem 8.2 of \cite{V1998}, $R(\mc{O},\sigma)$ can be expressed uniquely in the form
\begin{align} \label{eq:lusztigvogan}
R(\mc{O},\sigma) = Ind_{G^e}^G(\sigma) = \sum_{\lambda \in \Lambda^+(G)} m_{\lambda}(\mathcal{O},\sigma) Ind_{T}^{G}(e^{\lambda}),
\end{align}
where all but finitely many $m_{\lambda}(\mathcal{O},\sigma) \in \mathbb{Z}$ are zero.
\begin{definition'}\label{def:gamma}
Let $$\mc{N}_{o,\widehat{e}} = \{ (\mc{O},\sigma)| \mc{O}\ \text{is a nilpotent orbit,}\ \sigma \in \widehat{G_e} \},$$
then the {\bf Lusztig-Vogan map} is defined to be
$$\Gamma: \mc{N}_{o,\widehat{e}} \to \Lambda^+(G),$$
where $\Gamma(\mathcal{O},\sigma) = \lambda_{max}$ is the maximal element among all $m_{\lambda}(\mathcal{O},\sigma) \neq 0$ in Equation (\ref{eq:lusztigvogan}).
\end{definition'}
Note that by the $W$-symmetry of weights for finite dimensional $G$-modules, any expressions of the form $\sum_{\sigma \in \Lambda(G)} a_{\lambda} Ind_{T}^{G}(e^{\lambda})$ can be $W$-conjugated such that each summand lies in $\Lambda^+(G)$. Therefore, $\Gamma(\mathcal{O},\sigma)$ can be represented by any of its $W$-conjugates.

A priori, $\Gamma$ is not well-defined. Under a different definition from $\Gamma$ above, Bezrukavnikov \cite{Be} proved that there is a bijection between the two sets. The work of Achar \cite{Ac1}, \cite{Ac2} showed that the two definitions are the same for Type $A$ orbits, and explicitly computed the bijection for all Type $A$ orbits. In this manuscript, we only study $\Gamma$ as defined above. In fact, given that Conjecture \ref{conj:unip} holds, one can study $\Gamma$ using unipotent representations $X_{\mc{O},\pi}$. Also, all unipotent representations $X_{\mc{O},\pi}$ can also be expressed in the form of Equation (\ref{eq:lusztigvogan}) (see Example \ref{eg:unip}). One may therefore wish to `define' the following:
\begin{definition'}\label{def:mu}
Let $\Psi$ be the map
$$\Psi: \mc{N}_{o,\widehat{\overline{a}}} \to \Lambda^+(G),$$
where $\Psi(\mc{O},\pi)$ is the maximal element of $X_{\mc{O},\pi}$ expressed in the form of Equation (\ref{eq:lusztigvogan}).
\end{definition'}
As in Definition \ref{def:gamma}, one does not know whether $\Psi$ is well-defined. Indeed, we have:\\

\noindent {\bf Theorem B} (Theorem \ref{prop:psigeneral}).
{\it Let $G = SO(2n+1,\bb{C})$, $Sp(2n,\bb{C})$ or $SO(2n,\bb{C})$. Then $\Psi$ is well-defined and injective, and can be computed explicitly for all special orbits $\mc{O}$ and $\pi \in \widehat{\overline{A}(\mc{O})}$.}\\

Note that by Theorem A, $\Psi(\mc{O},\pi) = \Gamma(\mc{O}, \rho)$ for all classical, special nilpotent orbits $\mc{O}$. Therefore, $\Gamma$ can be computed explicitly in these cases.

\subsection{A Conjecture of Achar and Sommers} \label{subsec:AS}
We now relate Theorem B to a conjecture of Achar and Sommers in \cite{AS}. Let $\mathcal{N}_o$ be the set of all nilpotent orbits in a classical Lie algebra $\mathfrak{g}$, ${}^L{\mathcal{N}_o}$ be the set of all nilpotent orbits in the Langlands dual ${}^L\mathfrak{g}$. In \cite{S2}, Sommers constructed a surjective map
$$d: \mathcal{N}_{o,c}\twoheadrightarrow {}^L\mathcal{N}_o,$$
where $\mathcal{N}_{o,c} = \{(\mathcal{O},C) | \mathcal{O} \in \mathcal{N}_o, C \subset \overline{A}(\mathcal{O})\ \text{conjugacy class} \}$. In fact, Sommers explicitly described the surjection by assigning a {\bf canonical preimage} to each nilpotent orbit $\mc{O}^{\vee} \in {}^L\mathcal{N}_o$.

For classical groups, $\overline{A}(\mathcal{O}) \cong (\mathbb{Z}/2\mathbb{Z})^q$ is abelian, so every conjugacy class $C$ is a single element. For a choice of generators $\{\theta_q, \dots, \theta_1\}$ of $\overline{A}(\mc{O})$, let $C = \Pi_{i \in I} \theta_i$ for some subset $I \subset \{q,q-1,\dots,1\}$. Define
$$K_C := \langle \theta_i | i \in I \rangle \leq \overline{A}(\mathcal{O}),$$
and consider the preimage of $K_C$ under the quotient map $r: A(\mathcal{O}) \to \overline{A}(\mathcal{O})$, i.e. $H_C := r^{-1}(K_C)$. Then $H_C$, as a subgroup of the $G$-equivariant component group $A(\mathcal{O})$, corresponds to an orbit cover $\widetilde{\mathcal{O}}^{C} \cong G/G_C$ of $\mathcal{O}$. The conjecture of Achar and Sommers is given by:

\begin{conjecture}(\cite{AS}, Conjecture 3.1) \label{conj:AS}
Suppose $\mc{O}^{\vee}$ is a classical nilpotent orbit in ${}^{L}\mathcal{N}_o$ with canonical preimage $(\mc{O},C)$. Writing
\begin{align} \label{eq:rocchar}
R(\widetilde{\mathcal{O}}^C) \cong Ind_{G_C}^{G}(\mathrm{triv}) = \sum_{\lambda \in \Lambda^+} m_{\lambda} Ind_{T}^G(\lambda)
\end{align}
as in the form of Equation \eqref{eq:lusztigvogan}, then the maximal element in the expression is equal to $h^{\vee}$, the semisimple element of a Jacobson-Morozov triple of $\mc{O}^{\vee}$.
\end{conjecture}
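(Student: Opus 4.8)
The plan is to deduce Conjecture~\ref{conj:AS} from Theorems~A and~B: first rewrite the left side of~\eqref{eq:rocchar} as a sum of restrictions of special unipotent representations, then reduce the statement about the maximal element to a combinatorial identity for partitions which is verified separately in types $B$, $C$ and $D$.

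The first step is to peel the cover $\widetilde{\mathcal{O}}^C$ down to $\mathcal{O}$. Since $G_C$ has identity component $(G_e)^0$ with $G_C/(G_e)^0 = H_C = r^{-1}(K_C)$, induction in stages gives
\begin{equation*}
R(\widetilde{\mathcal{O}}^C)\;\cong\;\mathrm{Ind}_{G_C}^{G}(\mathrm{triv})\;\cong\;\mathrm{Ind}_{G_e}^{G}\Bigl(\mathrm{Ind}_{H_C}^{A(\mathcal{O})}(\mathrm{triv})\Bigr).
\end{equation*}
For $G=SO(2n+1,\mathbb{C}),\,Sp(2n,\mathbb{C}),\,SO(2n,\mathbb{C})$ the component group $A(\mathcal{O})$ is finite abelian, so $\mathrm{Ind}_{H_C}^{A(\mathcal{O})}(\mathrm{triv})$ is the sum of the characters of $A(\mathcal{O})$ trivial on $H_C$; via $A(\mathcal{O})/H_C\cong\overline{A}(\mathcal{O})/K_C$ these are precisely the inflations of the characters $\pi$ of $\overline{A}(\mathcal{O})$ with $\pi|_{K_C}=\mathrm{triv}$, whence
\begin{equation*}
R(\widetilde{\mathcal{O}}^C)\;\cong\;\bigoplus_{\pi\in\widehat{\overline{A}(\mathcal{O})},\ \pi|_{K_C}=\mathrm{triv}}R(\mathcal{O},\pi).
\end{equation*}
By construction the canonical preimage $(\mathcal{O},C)$ of a classical $\mathcal{O}^\vee$ has $\mathcal{O}$ special (\cite{S2}), so Theorem~A applies to each summand, and under the correspondence made explicit in Sections~\ref{subsec:generators} and~\ref{subsec:generalize} it reads $R(\mathcal{O},\pi)\cong X_{\mathcal{O},\pi}|_{K_{\mathbb{C}}}$. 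Therefore, when everything is expanded in the form~\eqref{eq:lusztigvogan}, the coefficient of $\mathrm{Ind}_T^G(e^\lambda)$ in $R(\widetilde{\mathcal{O}}^C)$ is $\sum_{\pi}m_\lambda(\mathcal{O},\pi)$, summed over the admissible $\pi$.

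The second step extracts the leading term. By definition $\Psi(\mathcal{O},\pi)$ dominates every $\lambda$ occurring in $X_{\mathcal{O},\pi}$, so, provided $\lambda^{*}:=\max\{\Psi(\mathcal{O},\pi):\pi|_{K_C}=\mathrm{triv}\}$ exists, every $\lambda$ occurring in $R(\widetilde{\mathcal{O}}^C)$ is $\le\lambda^{*}$, and the coefficient of $\mathrm{Ind}_T^G(e^{\lambda^{*}})$ equals $\sum_{\pi:\Psi(\mathcal{O},\pi)=\lambda^{*}}m_{\lambda^{*}}(\mathcal{O},\pi)$. By the injectivity of $\Psi$ (Theorem~B) at most one $\pi$ contributes, and its contribution is the nonzero leading coefficient of $X_{\mathcal{O},\pi}$, so no cancellation occurs and the maximal element of $R(\widetilde{\mathcal{O}}^C)$ is exactly $\lambda^{*}$. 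It therefore remains to show that $\{\Psi(\mathcal{O},\pi):\pi|_{K_C}=\mathrm{triv}\}$ has a unique maximal element and that it equals $h^\vee$.

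This last step is the core of the argument and, I expect, the main obstacle. Using the explicit formula for $\Psi(\mathcal{O},\pi)$ supplied by Theorem~B, Sommers' explicit description of the canonical preimage $(\mathcal{O},C)$ of $\mathcal{O}^\vee$ in terms of partitions, and the classical recipe producing the dominant weight $h^\vee$ from the partition of $\mathcal{O}^\vee$, one identifies a largest admissible character $\pi_0$ --- pinned down by the ``marked rows'' of Sommers' construction --- and checks directly that $\Psi(\mathcal{O},\pi_0)=h^\vee$. The delicate part is to reconcile the partition operations (collapses and box moves) appearing in the formula for $\Psi$ with those governing Sommers' canonical preimage and the passage from the partition of $\mathcal{O}^\vee$ to its weighted Dynkin diagram, and to check that no other admissible $\pi$ produces a weight larger than or incomparable to $\Psi(\mathcal{O},\pi_0)$ --- the latter being exactly what makes ``maximal element'' in Conjecture~\ref{conj:AS} well posed. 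This verification is carried out separately for $B_n$, $C_n$ and $D_n$, with the usual additional bookkeeping for very even orbits in $D_n$ and for dual orbits $\mathcal{O}^\vee$ that are not special.
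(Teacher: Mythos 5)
Your reduction steps are precisely the paper's. Step one (peeling the cover to $\mathcal{O}$ via $A(\mathcal{O})/H_C\cong\overline{A}(\mathcal{O})/K_C$ and summing over characters trivial on $K_C$) is the paper's Proposition~\ref{prop:ROC}; step two (using Theorem~A to replace $R(\mathcal{O},\cdot)$ by $X_{\mathcal{O},\cdot}$, then Theorem~B's injectivity to rule out cancellation and reduce to computing $\max\{\Psi(\mathcal{O},\pi):\pi|_{K_C}=\mathrm{triv}\}$) is exactly how the paper frames the problem. So the plan is right and well matched to the paper.

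The problem is step three, which you yourself flag as ``the core of the argument and the main obstacle'' and then leave as a sketch. That sketch conceals the actual work. The paper does it in two stages. First, Lemma~\ref{lem:canonical} gives an explicit partition-level description of Sommers' canonical preimage $(\mathcal{O},C_I)$ of $\mathcal{O}^\vee$: one runs the $B$-, $C$- or $D$-collapse to get the special orbit $\mathcal{O}^\vee_{sp}$ above $\mathcal{O}^\vee$, records at which column pairs $(\rho''_{2i},\rho''_{2i-1})$ the collapse differs from $\mathcal{O}^\vee$, and shows via Proposition~13 of \cite{S2} that the resulting set is exactly $I$ in the generators $\theta_i$ of $\overline{A}(\mathcal{O}_{sp})$. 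A crucial byproduct is that, on the canonical preimage, $a''_{2i}=a''_{2i-1}$ for $i\in I$. Second, the comparison: that equality forces $\tfrac14(a''_{2i}-a''_{2i-1})\in\mathbb{N}$, which selects the right branch of the case split in the formulas $A'_s$, $B'_t$ of Proposition~\ref{prop:psinomunu}, and only then does the expression $\Psi(\mathcal{O},\pi_0)$ (with $\pi_0$ given by $\chi_i=\mathrm{triv}$ for $i\in I$ and the maximizing choice on each $j\notin I$) collapse, block by block, to the Dynkin element $h^\vee$ read off the rows of $\mathcal{O}^\vee$. Neither the identification of $\pi_0$, nor the existence of a unique maximal element among the admissible $\Psi(\mathcal{O},\pi)$ (they are compared block by block using the explicit formula; this is what makes ``maximal element'' well posed), nor the equality $\Psi(\mathcal{O},\pi_0)=h^\vee$, is established in your proposal. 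These are not routine bookkeeping: they are the content of Lemma~\ref{lem:canonical} and the type-by-type verification in the proof of Theorem~C, and without them the proof is incomplete.
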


Our last main Theorem gives an affirmative answer of the conjecture:\\

\noindent {\bf Theorem C} (Section \ref{sec:AS}).
{\it Conjecture \ref{conj:AS} holds for all orbits in ${}^L\mathcal{N}_o$.}

%%%%%%%%%%%%%%%%
%%%%%%%%%%%%%%%%
%%%SECTION 2%%%%
%%%%%%%%%%%%%%%%
%%%%%%%%%%%%%%%%

\section{Nilpotent Orbits in Classical Lie Algebras}
\subsection{Basic Setup}
We begin by studying special orbits in the classical Lie algebras. Recall in \cite{CM} that all classical nilpotent orbits $\mc{O}$ can be described by Young diagrams satisfying certain properties which we will describe below. We use square bracket $[r_l \geq r_{l-1} \geq \dots \geq r_1]$ to denote a Young diagram of size $n$ in terms of rows, and round bracket $(c_l \geq c_{l-1} \geq \dots \geq c_1)$ in terms of columns.

\begin{proposition} \label{prop:classrows}
The classification of classical special nilpotent orbits in terms of rows is given as follows:
\begin{itemize}
\item {\bf Type $B_n$:} Let $G = SO(2n+1,\bb{C})$, then all nilpotent orbits $\mc{O}$ in $\mf{g} = Lie(G)$ are parametrized by Young diagrams $[r_{2k} \geq r_{2k-1} \geq \dots \geq r_0]$ of size $2n+1$ such that every even number appears an even number of times among the $r_i$'s. The orbit $\mc{O}$ is special iff its {\bf transpose} $(r_{2k} \geq r_{2k-1} \geq \dots \geq r_0)$ defines a nilpotent orbit of Type $B_n$. In other words, the even rows of $\mc{O}$ must occur in the form $r_{2l-1} = r_{2l-2} = 2b$.

\item {\bf Type $C_n$:} Let $G = Sp(2n,\bb{C})$, then all nilpotent orbits $\mc{O}$ in $\mf{g} = Lie(G)$ are parametrized by Young diagrams $[r_{2k+1} \geq r_{2k} \geq \dots \geq r_1]$ of size $2n$ such that every odd number appears an even number of times among the $r_i$'s (take $r_1 = 0$ if necessary). The orbit $\mc{O}$ is special iff its transpose $(r_{2k+1} \geq r_{2k} \geq \dots \geq r_1)$ defines a nilpotent orbit of Type $C_n$. In other words, the odd rows of $\mc{O}$ must occur in the form $r_{2l-1} = r_{2l-2} = 2c+1$.

\item {\bf Type $D_n$:} Let $G = SO(2n,\bb{C})$, then all nilpotent orbits $\mc{O}$ in $\mf{g} = Lie(G)$ are parametrized by Young diagrams $[r_{2k+1} \geq r_{2k} \geq \dots \geq r_0]$ of size $2n$ such that every even number appears an even number of times among the $r_i$'s. The only exceptions are the following -- if the Young diagram is of the form
$$[2\alpha_k, 2\alpha_k, 2\alpha_{k-1}, 2\alpha_{k-1},\dots, 2\alpha_1, 2\alpha_1],$$
i.e. the diagram is {\bf very even}, then there are two orbits $\mc{O}_I$, $\mc{O}_{II}$ attached to this diagram. These orbits are called {\bf very even orbits}. The orbit $\mc{O}$ is special iff its transpose $(r_{2k+1} \geq r_{2k} \geq \dots \geq r_0)$ defines a nilpotent orbit of Type $C_n$. In other words, the even rows of $\mc{O}$ must occur in the form $r_{2l-1} = r_{2l-2} = 2d$. In particular, all very even orbits are special.
\end{itemize}
\end{proposition}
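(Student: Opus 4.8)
The parametrisation of all classical nilpotent orbits by partitions with the stated parity conditions is the classical result recalled from \cite{CM}; what needs to be established is the characterisation of the \emph{special} orbits, and the plan is to split this into two parts: first recall the known description of special orbits via the transpose partition, and then translate that description into the ``in other words'' conditions on rows.

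For the first part, recall that an orbit is special exactly when it is fixed by $d^2$, where $d$ is the Lusztig--Spaltenstein duality map between $\mathcal N_o(\mathfrak g)$ and $\mathcal N_o({}^L\mathfrak g)$, and that on partitions $d(\mathcal O_\lambda)$ is obtained by transposing $\lambda$ (with the harmless adjustment absorbing the size difference between types $B_n$ and $C_n$) and then passing to the largest partition below the result, in the dominance order, that is legal for the dual type. If $\lambda^t$ is already legal of the appropriate type then both collapses making up $d^2$ are trivial, so $d^2(\mathcal O_\lambda)=\mathcal O_\lambda$ and $\mathcal O_\lambda$ is special; conversely, using that $d$ restricts to an order-reversing bijection on special orbits whose image is exactly the set of special orbits of the dual, a genuine collapse prevents $\mathcal O_\lambda$ from being special. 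This recovers the known fact, recorded in \cite{CM}, that $\mathcal O_\lambda$ is special if and only if $\lambda^t$ is a legal partition --- of type $B$ when $\mathfrak g$ is of type $B_n$, and of type $C$ when $\mathfrak g$ is of type $C_n$ or $D_n$ --- that is, iff the transpose defines a nilpotent orbit of the type named in the Proposition. For a very even partition in type $D_n$ one checks directly that $\lambda^t$ is again very even, hence a legal $C$-partition, so both attached orbits are special.

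The second, and main, part is the translation. The key combinatorial identity is that, writing the parts of $\lambda$ in non-increasing order $R_1\ge R_2\ge\cdots$ (with $R_{m+1}=0$, $m$ the number of parts), the multiplicity of the value $v$ in $\lambda^t$ is $R_v-R_{v+1}$. Hence ``$\lambda^t$ has every even part of even multiplicity'' amounts to $R_{2l}\equiv R_{2l+1}\pmod{2}$ for all $l$, and ``$\lambda^t$ has every odd part of even multiplicity'' amounts to $R_{2l-1}\equiv R_{2l}\pmod{2}$ for all $l$. Feeding in the parity restriction that $\lambda$ itself already satisfies (even parts of even multiplicity in types $B_n$, $D_n$; odd parts of even multiplicity in type $C_n$), a case analysis on the maximal constant blocks of $R_1\ge R_2\ge\cdots$ shows these congruences are equivalent to: the even rows of $\lambda$ (in types $B_n$, $D_n$), respectively the odd rows (in type $C_n$), occur in equal consecutive pairs $r_{2l-1}=r_{2l-2}$. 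This is exactly the ``in other words'' clause; the parity of the indices in the pairing is rigid because the largest part of $\lambda$ always carries the fixed index $2k$ in type $B_n$ and $2k+1$ in types $C_n$, $D_n$.

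I expect this last case analysis to be the only real obstacle. It is elementary but genuinely uses the alignment of the pairs of indices: for instance $[3,2,2,1]$ in type $D_4$ has its two even rows forming an equal pair, yet one straddling the ``wrong'' pair of indices, and it is indeed not special, consistently with its transpose $[4,3,1]$ not being a legal $C$-partition. A secondary point worth a sentence is the asymmetry in type $D_n$, where the condition is that $\lambda^t$ be a $C$-partition rather than a $D$-partition; I would reconcile this with the classification of special representations of $W(D_n)$ by symbols and verify it separately on the very even orbits.
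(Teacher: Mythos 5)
Your overall plan is sound: cite \cite{CM} for the partition parametrization and the transpose criterion, then translate that criterion into the ``in other words'' constraint on rows via the multiplicity formula. This is close in spirit to the paper's proof, which simply cites Section 5 and 6.3 of \cite{CM} and then refers to Proposition~\ref{prop:class} (itself reduced to \cite{W3}) for the reformulation; your version is more self-contained. The identity (multiplicity of $v$ in $\lambda^t$ equals $R_v-R_{v+1}$), the resulting congruences, and the observation that the prescribed number of rows rigidifies the indexing of the pairs are all correct. The deferred case analysis is elementary and does go through: starting from a would-be bad pair $r_{2l-1}>r_{2l-2}$ of the wrong parity, the even-multiplicity constraint on $\lambda$ pushes the offending constant blocks upward until they collide with the fixed top index, giving a parity contradiction; you have correctly identified $[3,2,2,1]$ in $D_4$ as the kind of example that controls this.

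The genuine gap is in your treatment of the transpose criterion itself for type $D_n$. Your $d^2$-heuristic, as stated, would deliver the condition ``$\lambda^t$ is a $D$-partition'' (since ${}^{L}\mathfrak{so}(2n)=\mathfrak{so}(2n)$), and your converse clause --- ``a genuine collapse prevents $\mathcal{O}_\lambda$ from being special'' --- is false in type $D$. Concretely, take $\lambda=[3,3,1,1]$ in $D_4$: $\lambda^t=[4,2,2]$ is \emph{not} a $D$-partition, so a genuine $D$-collapse occurs, yet that collapse returns $[3,3,1,1]=\lambda$, hence $d^2(\mathcal{O}_\lambda)=\mathcal{O}_\lambda$ and $\mathcal{O}_\lambda$ \emph{is} special; the regular orbit $[7,1]$ exhibits the same phenomenon. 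So the ``asymmetry'' you flag at the end is not a one-sentence patch: the $d^2$-heuristic simply does not produce the correct type-$D$ criterion, and you must import the statement ``$\mathcal{O}_\lambda$ special $\iff \lambda^t$ is a $C$-partition'' from \cite{CM} (or prove it independently via symbols) rather than derive it from the collapse picture. Since the rest of your argument takes the $C$-partition criterion as its input for type $D$, the final conclusion is unaffected, but the justification of that input is the missing step.
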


\begin{proof}
The description of classical nilpotent orbits and special nilpotent orbits are given in Section 5 and 6.3 of \cite{CM} respectively. And the last statement for each type follows from Proposition \ref{prop:class} below.
\end{proof}

From now on, we write $\mc{O} = [r_l \geq r_{l-1} \geq \dots \geq r_1]$ to denote a non-very even nilpotent orbit $\mc{O}$ whose partition is given by $[r_l \geq r_{l-1} \geq \dots r_1]$, and write
\[
\mc{O}_{I,II} = [2\alpha_k, 2\alpha_k, 2\alpha_{k-1}, 2\alpha_{k-1},\dots, 2\alpha_1, 2\alpha_1]_{I,II}\]
for the two very even orbits corresponding to a very even Young diagram. Also, we use $\mc{O}^{\bf t} = (r_l \geq r_{l-1} \geq \dots \geq r_1)$ to denote the transpose of $\mc{O}$.

\begin{proposition} \label{prop:abarrows}
The component group $A(\mc{O})$ and the Lusztig quotient $\overline{A}(\mc{O})$ of the classical special nilpotent orbits are given as follows:
\begin{itemize}
\item {\bf Type $B_n$:} Let $G = SO(2n+1,\bb{C})$ and $\mc{O} = [r_{2k} \geq r_{2k-1} \geq \dots \geq r_0]$ be a special orbit. Then $A(\mc{O}) \cong (\bb{Z}/2\bb{Z})^p$, where $p$ is one less than the number of distinct odd $r_i$'s showing up in $\mc{O}$. For the Lusztig quotient, we separate all even rows (which, by our classification of special orbits in Proposition \ref{prop:classrows}, must be of the form $r_{2l-1} = r_{2l-2} = \alpha$), along with odd row pairs of the form $r_{2l} = r_{2l-1} = \beta$ and get
\begin{align*}
    \mc{O} = &[r_{2q}'' > r_{2q-1}'' \geq r_{2q-2}'' > \dots \geq r_2'' > r_1'' \geq r_0'']\\
		&\cup [\alpha_1, \alpha_1, \dots, \alpha_x, \alpha_x] \cup [\beta_1, \beta_1, \dots, \beta_y, \beta_y],
\end{align*}
    then $\overline{A}(\mc{O}) \cong (\bb{Z}/2\bb{Z})^q$.\\ %generated by $\{ \phi_{r_{2l}'',r_{2l-1}''}\ |\ l = q, q-1, \dots , 1\}$.\\
\item {\bf Type $C_n$:}  Let $G = Sp(2n,\bb{C})$ and $\mc{O} = [r_{2k+1} \geq r_{2k} \geq \dots \geq r_1]$ be a special orbit. Then $A(\mc{O}) \cong (\bb{Z}/2\bb{Z})^p$, where $p$ is the number of distinct even $r_i$'s showing up in $\mc{O}$. For the Lusztig quotient, we separate all odd rows (which, by our classification of special orbits in Proposition \ref{prop:classrows}, must be of the form $r_{2l-1} = r_{2l-2} = \alpha$), and even row pairs of the form $r_{2l} = r_{2l-1} = \beta$ and get
\begin{align*}
    \mc{O} = &[r_{2q+1}'' \geq r_{2q}'' > r_{2q-1}'' \geq \dots > r_3'' \geq r_2'' > r_1'']\\
		&\cup [\alpha_1, \alpha_1, \dots, \alpha_x, \alpha_x] \cup [\beta_1, \beta_1, \dots, \beta_y, \beta_y],
\end{align*}
    then $\overline{A}(\mc{O}) \cong (\bb{Z}/2\bb{Z})^q$.\\ %generated by $\{ \phi_{r_{2l}'',r_{2l-1}''}\ |\ l = q, q-1, \dots , 1\}$.\\
\item {\bf Type $D_n$:}  Let $G = SO(2n,\bb{C})$ and $\mc{O} = [r_{2k+1} \geq r_{2k} \geq \dots \geq r_0]$ be a special, non-very even orbit. Then $A(\mc{O}) \cong (\bb{Z}/2\bb{Z})^p$, where $p$ is one less than the number of distinct odd $r_i$'s showing up in $\mc{O}$. For the Lusztig quotient, we separate all even rows (which, by our classification of special orbits in Proposition \ref{prop:classrows}, must be of the form $r_{2l-1} = r_{2l-2} = \alpha$), and all odd row pairs $r_{2l} = r_{2l-1} = \beta$ and get
  \begin{align*}
	\mc{O} = &[r_{2q+1}'' \geq r_{2q}'' > r_{2q-1}'' \geq \dots \geq r_2'' > r_1'' \geq r_0'']\\
	&\cup [\alpha_1, \alpha_1, \dots, \alpha_x, \alpha_x] \cup [\beta_1, \beta_1, \dots, \beta_y, \beta_y],
	\end{align*}
    then $\overline{A}(\mc{O}) \cong (\bb{Z}/2\bb{Z})^q$. Moreover, if $\mc{O}_I$ and $\mc{O}_{II}$ are very even, then all $A(\mc{O}_I)$, $A(\mc{O}_{II})$, $\overline{A}(\mc{O}_I)$, $\overline{A}(\mc{O}_{II})$ are trivial.
\end{itemize}
\end{proposition}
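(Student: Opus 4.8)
The plan is to handle the equivariant component group $A(\mc{O})$ and the Lusztig quotient $\overline{A}(\mc{O})$ separately, treating types $B$ and $D$ together, with type $C$ the analogue obtained by exchanging the roles of odd and even parts.

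For $A(\mc{O})$ I would start from the classical description of centralizers of nilpotents in the full orthogonal and symplectic groups (see \cite{CM}): if $\mc{O}$ has partition $\lambda$, then the orthogonal component group is $(\bb{Z}/2\bb{Z})^{s}$ with $s$ the number of distinct odd (positive) parts of $\lambda$, while the symplectic one is $(\bb{Z}/2\bb{Z})^{s}$ with $s$ the number of distinct even parts. Passing from $O_N$ to $SO_N$ amounts to intersecting each centralizer with the determinant-one subgroup, which cuts the group in half exactly when $\lambda$ has at least one odd part and is harmless otherwise. In type $C$ there is nothing to adjust, giving $p=\#\{\text{distinct even } r_i\}$. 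In types $B$ and $D$ a special partition that is not very even always has an odd part --- in type $D$ because a partition of $2n$ all of whose parts are even automatically has all its (even) parts of even multiplicity, hence is very even --- so the exponent drops by one, giving the stated $p$. For a very even partition all parts are even, so $s=0$ and $A(\mc{O}_I)=A(\mc{O}_{II})=1$.

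For $\overline{A}(\mc{O})$ I would use that it is the canonical quotient of $A(\mc{O})$ attached by Lusztig to the special Weyl group representation indexed by $\mc{O}$ (\cite[Section 4]{BV1985}), together with the fact that for a classical special orbit this quotient is computed by an explicit combinatorial algorithm on the special symbol of $\lambda$ (\cite{S2}), where $\overline{A}(\mc{O})\cong(\bb{Z}/2\bb{Z})^{q}$ with $q$ the number of ``singleton'' columns of the symbol. The content of the proposition is the translation of this count into the row data, and I would carry it out as follows. Using the normal form of a special partition from Proposition~\ref{prop:classrows}, isolate the rows forced to occur in equal adjacent same-parity pairs --- the block $[\alpha_1,\alpha_1,\dots,\alpha_x,\alpha_x]$ of even rows in types $B,D$ (odd rows in type $C$) and the block $[\beta_1,\beta_1,\dots,\beta_y,\beta_y]$ of the remaining same-value pairs of the opposite parity. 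Removing these leaves a partition $[r''_{\bullet}]$ whose consecutive inequalities alternate between $>$ and $\geq$ in exactly the pattern displayed in the statement; the parity of $N$ (odd in type $B$, even in types $C,D$) and the presence or absence of a row $r_0$ pin down where the pattern begins and ends, and the number of $>$-steps in it is $q$. One then checks, column by column, that under the symbol construction the two paired blocks contribute only doubled columns while $[r''_{\bullet}]$ contributes precisely $q$ singleton columns, so $\overline{A}(\mc{O})\cong(\bb{Z}/2\bb{Z})^{q}$. For very even $\mc{O}_I,\mc{O}_{II}$ the Lusztig quotient is a quotient of the trivial group $A(\mc{O}_I)=A(\mc{O}_{II})=1$, hence trivial.

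The main obstacle is the combinatorial bookkeeping in the previous paragraph. The delicate points are: making the decomposition $\lambda=[r''_{\bullet}]\cup[\alpha_i,\alpha_i]\cup[\beta_i,\beta_i]$ canonical when some $\alpha_i$ coincides with some $\beta_j$ (one must fix which block absorbs each repeated value so that the residual $[r''_{\bullet}]$ really has the asserted strict/weak pattern); verifying that stripping off one paired block changes the special symbol only by deleting a doubled column; and keeping track of the parity and boundary cases (a trailing $0$ may have to be adjoined, $r_0$ may or may not be present, and the alternation in $[r''_{\bullet}]$ may begin with either $>$ or $\geq$ depending on type). Once these are settled, reading off $A(\mc{O})\cong(\bb{Z}/2\bb{Z})^{p}$ and $\overline{A}(\mc{O})\cong(\bb{Z}/2\bb{Z})^{q}$ is immediate.
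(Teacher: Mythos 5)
Your handling of $A(\mc{O})$ is the same as the paper's: both reduce to the description of centralizer component groups in Section~6.1 of \cite{CM} and observe that passing from $O_N$ to $SO_N$ drops the exponent by one exactly when an odd part is present, which (as you correctly note) is automatic for a special, non--very-even partition in types $B$ and $D$. For the Lusztig quotient, however, the paper takes a more direct route than yours: instead of going through the special symbol of the associated Weyl group representation, it quotes the row-based description of the generators of $\overline{A}(\mc{O})$ from Section~5 of \cite{S2} --- phrased in terms of the sets $S_{even}$ and $S_{odd}$ of row sizes of a given parity appearing with even or odd multiplicity --- and simply matches that description against the normal form $[r''_{2q} > r''_{2q-1} \geq r''_{2q-2} > \cdots]$ displayed in the statement, observing that each pair $(r''_{2i-1}, r''_{2i-2})$ contributes one generator whether or not the two entries coincide. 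This avoids the symbol-to-partition translation and the boundary/parity bookkeeping that you flag as the delicate point. Your special-symbol route is mathematically sound (the canonical quotient is indeed read off from singleton entries of the special symbol), but that computation is really due to Lusztig \cite{Lu1} rather than \cite{S2}, and it requires the extra step of converting singleton counts back into the normal-form index $q$ --- precisely the combinatorics the paper sidesteps by working with Sommers' row-level formulation throughout.
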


\begin{proof}
The description of $A(\mc{O})$ for each classical type is stated in Section 6.1 of \cite{CM}, and the generators of $\overline{A}(\mc{O})$ is stated in Section 5 of \cite{S2}. More precisely, in Type $B_n$ and $D_n$, if $r_{2i}'' > r_{2i-1}'' = r_{2i-2}''$, then $r_{2i-2}'' \in S_{even}$ (in the notations of \cite{S1}, \cite{S2} - see proof of Proposition \ref{prop:abar} for some examples) contributes a generator in $\overline{A}(\mc{O})$. If $r_{2i} > r_{2i-1}'' > r_{2i-2}''$, then the two distinct numbers $r_{2i-1}''$,  $r_{2i-2}''$ belong to $S_{odd}$, and $r_{2i-2}''$ can also be chosen as a generator of $\overline{A}(\mc{O})$. %For later purposes, we denote the generator $r_{2i-2}''$ by $\phi_{r_{2i}'',r_{2i-1}''}$ for all $i = q, q-1, \dots, 1$.\\

The arguments are similar in Type $C_n$. If $r_{2i+1}'' = r_{2i}'' > r_{2i-1}''$, then $r_{2i}'' \in S_{even}$ and it contributes a generator of $\overline{A}(\mc{O})$. If $r_{2i+1}'' > r_{2i}'' > r_{2i-1}''$, then $r_{2i+1}''$, $r_{2i}'' \in S_{odd}$ and $r_{2i}''$ can also be chosen as a generator of $\overline{A}(\mc{O})$. %As before, we denote the generator $r_{2i}''$ by $\phi_{r_{2i}'',r_{2i-1}''}$ for all $i = q, q-1, \dots, 1$.
\end{proof}

%We now describe a choice of generators of $\overline{A}(\mc{O}) \cong (\bb{Z}/2\bb{Z})^q$ explicitly. For Type $B_n$ and Type $D_n$ orbits, elements of $A(\mc{O})$ are parametrized by subsets of distinct odd rows of $\mc{O}$ with even number of elements. By the last paragraph of Section 5 in \cite{S2}, the elements $\phi_{i} := \{r_{2i}'', r_{2i-2}''\}$, $i = q, \dots, 1$ are representatives of elements in $A(\mc{O})$ that descend to $\overline{A}(\mc{O})$. Moreover, they generate $\overline{A}(\mc{O})$. For Type $C_n$ orbits, elements in $A(\mc{O})$ are parametrized by subsets of distinct even rows of $\mc{O}$. The elements $\phi_{i} := \{ r_{2i}'', r_{2i-2}'' \}$, $i = q, \dots, 2$ and $\phi_1 := \{r_2''\}$ generate $\overline{A}(\mc{O})$. This choice of generators is the same as the $\widetilde{b_k}$'s given in Section 3 of \cite{S3}.\\

It is sometimes easier to state our results using columns of the Young diagrams attached to $\mc{O}$. We give the classification of special nilpotent orbits, along with their component group $A(\mc{O})$ and the Lusztig quotient $\overline{A}(\mc{O})$ as follows:
\begin{proposition} \label{prop:class}
The classification of classical special nilpotent orbits in terms of columns is given as follows:
\begin{itemize}
\item {\bf Type $B_n$:} Let $G = SO(2n+1,\bb{C})$, then all nilpotent orbits in $\mf{g} = Lie(G)$ are parametrized by Young diagrams $\mc{O} = (a_{2k+1} \geq a_{2k} \geq \dots \geq a_0)$ of size $2n+1$ such that $a_{2l}+a_{2l-1}$ is even for all $l$ (We insist that there are {\bf even number of columns}, by taking $a_0 = 0$ if necessary).\\
The orbit $\mc{O}$ is special if all $a_i$'s are odd, or the columns of even sizes occur only in the form $a_{2l} = a_{2l-1} = 2b$ (note that this forces $a_0 = 0$).\\

\item {\bf Type $C_n$:} Let $G = Sp(2n,\bb{C})$, then all nilpotent orbits in $\mf{g} = Lie(G)$ are parametrized by Young diagrams $\mc{O} = (a_{2k} \geq a_{2k-1} \geq \dots \geq a_0)$ of size $2n$ such that $a_{2l}+a_{2l-1}$ is even for all $l$ (We insist that there are {\bf odd number of columns}, by taking $a_0 = 0$ if necessary). \\
The orbit $\mc{O}$ is special if all $a_i$'s are even, or the columns of odd sizes occur only in the form $a_{2l} = a_{2l-1} = 2c+1$.\\

\item {\bf Type $D_n$:} Let $G = SO(2n,\bb{C})$, then all nilpotent orbits in $\mf{g} = Lie(G)$ are parametrized by Young diagrams $\mc{O} = (a_{2k+1} \geq a_{2k} \geq \dots \geq a_0)$ of size $2n$ such that $a_{2l}+a_{2l-1}$ is even for all $l$ (We insist that there are {\bf even number of columns}, by taking $a_0 = 0$ if necessary). The only exceptions are the following - if the Young diagram is of the form
$$(2\alpha_k, 2\alpha_k, 2\alpha_{k-1}, 2\alpha_{k-1},\dots, 2\alpha_1, 2\alpha_1),$$
i.e. the diagram is {\bf very even}, then there are two orbits $\mc{O}_I$, $\mc{O}_{II}$ attached to this diagram. These orbits are called {\bf very even orbits}.\\
The orbit $\mc{O}$ is special if all $a_i$'s are even, or the columns of odd sizes occur only in the form $a_{2l} = a_{2l-1} = 2d+1$. In particular, all very even orbits are special.
\end{itemize}
\end{proposition}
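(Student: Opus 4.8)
\noindent\emph{Proof proposal.} The plan is to read this proposition off from the row--description already in hand (Proposition~\ref{prop:classrows}, equivalently \cite[\S5,\,\S6.3]{CM}) by transposing, using throughout one elementary identity: if $\mc{O}$ has columns of heights $b_1\ge b_2\ge\dots\ge b_m$ read from the left (and $b_{m+1}:=0$), then the number of rows of $\mc{O}$ equal to $j$ is $b_j-b_{j+1}$; the $a_i$ of the statement are just these column heights relisted so that the index decreases from the tallest column to the shortest, with one column of height $0$ appended when the prescribed number of columns has the opposite parity. I would first record two parity facts about the columns that the grouping $\{a_{2l-1},a_{2l}\}$ leaves unpaired: the shortest column $a_0$ is always even (it is $0$ after padding, and otherwise equals the multiplicity of the largest part of $\mc{O}$, which is even in each type), and the tallest column, which equals the number of parts of $\mc{O}$, is odd in Type $B_n$ and even in Type $D_n$ (in Type $C_n$ the tallest column is itself one of the paired columns, so its parity is immaterial).

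Given these, the parametrization clauses follow at once. By \cite[\S5]{CM}, an orbit of Type $B_n$ or $D_n$ is a partition in which every even part has even multiplicity, and an orbit of Type $C_n$ one in which every odd part has even multiplicity; by the identity above this says $b_j\equiv b_{j+1}\pmod 2$ for every even $j$, respectively every odd $j$, and after the re-indexing these congruences become precisely the condition ``$a_{2l}+a_{2l-1}$ is even for all $l$'', which groups the columns into the adjacent pairs $\{a_{2l-1},a_{2l}\}$ and leaves the tallest column and $a_0$ outside. The very even exception for Type $D_n$ is quoted verbatim from \cite{CM}. For the specialness clauses I would invoke Proposition~\ref{prop:classrows}: $\mc{O}$ is special exactly when $\mc{O}^{\bf t}$ is itself a legitimate nilpotent partition of Type $B_n$ (in the $B_n$ case) or of Type $C_n$ (in the $C_n$ and $D_n$ cases). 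Since the parts of $\mc{O}^{\bf t}$ are the column heights $a_i$ of $\mc{O}$, this becomes: $\mc{O}$ is special iff every even column height in Type $B_n$, respectively every odd column height in Types $C_n$ and $D_n$, occurs with even multiplicity among the $a_i$.

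The only substantive step is then to show that, granted the parametrization condition ``$a_{2l}+a_{2l-1}$ even'', this multiplicity statement is equivalent to the asserted form, namely that the columns of the exceptional parity occur only in equal adjacent pairs $a_{2l}=a_{2l-1}$ (or, degenerately, that all columns have the other parity). One direction is immediate: by monotonicity a value filling such pairs fills a block of columns which breaks into consecutive pairs, hence of even length. For the converse I would argue by downward induction on the exceptional-parity columns. Let $v$ be the largest column of the exceptional parity and let $B$ be the (consecutive) block of columns equal to $v$, say with largest index $p$ and smallest index $q$. Using the congruence $a_{2l}\equiv a_{2l-1}$, the maximality of $v$, and the two parity facts, one shows that $p$ must be even --- in Types $B_n$ and $D_n$ the tallest column has the non-exceptional parity, so it cannot lie in $B$, and in every type, if $B$ does not contain the tallest column then the column just above $B$ has value $>v$, hence the non-exceptional parity, which forces $p$ even by the congruence --- and then the even-multiplicity hypothesis forces $q$ to be odd; hence $B$ decomposes into pairs $a_{2l}=a_{2l-1}$. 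Deleting $B$, a union of full pairs, leaves two shorter strings of columns of the same combinatorial shape, and the induction closes. The crux is precisely this forcing of $B$ into alignment, and it is where the two parity facts are indispensable. As a byproduct, since $q$ is odd the block $B$ never reaches $a_0$; in Type $B_n$, where $a_0$ is itself even (the exceptional parity), this forces $a_0=0$ for a special orbit with columns of even size, which is the parenthetical remark in the statement. Everything else is routine bookkeeping between the row and column pictures.
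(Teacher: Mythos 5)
Your proposal is correct, and it is a genuinely more explicit route than the one the paper takes. The paper's own proof of this proposition consists of the single remark that the Type $C_n$ case is contained in Section~2 of \cite{W3} and that the other types follow similarly (the remaining text in that proof block is really aimed at Proposition~\ref{prop:abar}). You instead derive the column description directly from the row description of Proposition~\ref{prop:classrows} by transposition, and the two parity facts you record --- $a_0$ always even, and the tallest column odd in Type $B$ and even in Type $D$ --- are exactly what is needed to make the translation work, both for the parametrization congruences and for the forcing argument in the specialness clause. The downward induction is sound: once $p$ is shown even and $q$ odd, $B$ is a union of adjacent pairs $\{a_{2l},a_{2l-1}\}$, and the pairing structure strictly below $B$ is left intact because $q-1$ is again even; so the next block $B'$ either abuts $a_q$ (whence $p'=q-1$ is even) or has above it a column of value in $(v',v)$, which is of the non-exceptional parity and forces $p'$ even by the congruence. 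That second alternative is the content hiding behind your phrase ``leaves two shorter strings of columns of the same combinatorial shape, and the induction closes''; it would be worth spelling out that one sentence, but it is not a gap --- the idea is present and the step goes through. Everything else (very even exception, the forced $a_0=0$ in Type $B$, the degenerate all-one-parity case) is handled correctly. Net: your argument buys self-containment at the cost of a page of combinatorics, where the paper buys brevity at the cost of a citation.
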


\begin{proposition} \label{prop:abar}
The component group $A(\mc{O})$ and the Lusztig quotient $\overline{A}(\mc{O})$ of the classical special nilpotent orbits in terms of columns are given as follows:
\begin{itemize}
\item {\bf Type $B_n$:} Let $G = SO(2n+1,\bb{C})$ and $\mc{O} = (a_{2k+1} \geq a_{2k} \geq \dots \geq a_0)$ be a special orbit. Separate all column pairs $a_{2m+1} = a_{2m} = \nu$ and get
\begin{align*}
    \mc{O} = (a_{2p+1}' \geq a_{2p}' \geq \dots \geq a_0') \cup (\nu_1, \nu_1, \dots, \nu_y, \nu_y),
\end{align*}
then $A(\mc{O}) \cong (\bb{Z}/2\bb{Z})^p$. For the Lusztig quotient, we further separate all even column pairs $a_{2l}' = a_{2l-1}' = \mu$ and get
\begin{align*}
    \mc{O} = (a_{2q+1}'' \geq a_{2q}'' \geq \dots \geq a_0'')\cup (\mu_1, \mu_1, \dots, \mu_x, \mu_x) \cup (\nu_1, \nu_1, \dots, \nu_y, \nu_y),
\end{align*}
then $\overline{A}(\mc{O}) \cong (\bb{Z}/2\bb{Z})^q$.\\ %generated by $\{ \theta_{a_{2l}'',a_{2l-1}''}\ |\ l = q, q-1, \dots , 1\}$.\\
		
\item {\bf Type $C_n$:} Let $G = Sp(2n,\bb{C})$ and $\mc{O} = (a_{2k} \geq a_{2k-1} \geq \dots \geq a_0)$ be a special orbit. Separate all column pairs $a_{2m+1} = a_{2m} = \nu$ and get
    $$\mc{O} = (a_{2p}' \geq a_{2p-1}' \geq \dots \geq a_0') \cup (\nu_1, \nu_1, \dots, \nu_y, \nu_y),$$
    then $A(\mc{O}) \cong (\bb{Z}/2\bb{Z})^p$. For the Lusztig quotient, we further separate all odd column pairs $a_{2l}' = a_{2l-1}' = \mu$ and get
    $$\mc{O} = (a_{2q}'' \geq a_{2q-1}'' \geq \dots \geq a_0'')\cup (\mu_1, \mu_1, \dots, \mu_x, \mu_x) \cup (\nu_1, \nu_1, \dots, \nu_y, \nu_y),$$
    then $\overline{A}(\mc{O}) \cong (\bb{Z}/2\bb{Z})^q$.\\ %generated by $\{ \theta_{a_{2l}'',a_{2l-1}''}\ |\ l = q, q-1, \dots , 1\}$.\\

\item {\bf Type $D_n$:} Let $G = SO(2n,\bb{C})$ and $\mc{O} = (a_{2k+1} \geq a_{2k} \geq \dots \geq a_0)$ be a special, non-very even orbit. Separate all column pairs $a_{2m+1} = a_{2m} = \nu$ and get
    $$\mc{O} = (a_{2p+1}' \geq a_{2p}' \geq \dots \geq a_0') \cup (\nu_1, \nu_1, \dots, \nu_y, \nu_y),$$
    then $A(\mc{O}) \cong (\bb{Z}/2\bb{Z})^p$. For the Lusztig quotient, we further separate all odd column pairs $a_{2l}' = a_{2l-1}' = \mu$ and get
    $$\mc{O} = (a_{2q+1}'' \geq a_{2q}'' \geq \dots \geq a_0'')\cup (\mu_1, \mu_1, \dots, \mu_x, \mu_x) \cup (\nu_1, \nu_1, \dots, \nu_y, \nu_y),$$
    then $\overline{A}(\mc{O}) \cong (\bb{Z}/2\bb{Z})^q$. Moreover, if $\mc{O}_I$ and $\mc{O}_{II}$ are very even, then all $A(\mc{O}_I)$, $A(\mc{O}_{II})$, $\overline{A}(\mc{O}_I)$, $\overline{A}(\mc{O}_{II})$ are trivial.

\end{itemize}
\end{proposition}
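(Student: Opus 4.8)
The plan is to derive Proposition \ref{prop:abar} from Proposition \ref{prop:abarrows}: both $A(\mc{O})$ and $\overline{A}(\mc{O})$ are intrinsic invariants of $\mc{O}$, so only the recipe for reading them off the Young diagram changes, and what must be verified is that the column-stripping procedure stated here produces the same exponents $p$ and $q$ (and picks out the same distinguished generators) as the row-stripping procedure of Proposition \ref{prop:abarrows}.

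First I would record the elementary dictionary between the rows and the columns of one and the same diagram $\mc{O} = [r_l \ge \cdots \ge r_1]$. Writing the columns of $\mc{O}$, read from the tallest, as $c_1 \ge c_2 \ge \cdots$, one has $c_j - c_{j+1} = \#\{i : r_i = j\}$; hence two consecutive columns agree exactly when $j$ is not a row length, a strict drop $c_j > c_{j+1}$ occurs exactly at each distinct row value $j$, and the parity of $j$ is read off as the parity of the position at which the drop occurs. After re-indexing the columns from the right as $a_0 \le a_1 \le \cdots$ and padding with $a_0 = 0$ so that the number of columns has the parity prescribed for each type, the classical-type constraint that even (resp.\ odd) parts occur with even multiplicity turns into the stated condition ``$a_{2l}+a_{2l-1}$ even'', and each distinct odd (resp.\ even, in type $C$) row value is recorded as a pair $\{a_{2m+1}, a_{2m}\}$ with $a_{2m+1} \ne a_{2m}$. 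Comparing with Proposition \ref{prop:abarrows}, where the exponent $p$ is the number of distinct odd $r_i$ minus one in types $B$ and $D$ and the number of distinct even $r_i$ in type $C$, then yields $A(\mc{O}) \cong (\bb{Z}/2\bb{Z})^p$ with the asserted $p$; the ``minus one'' in types $B$, $D$ versus its absence in type $C$ is accounted for by the different parity conventions on the number of columns left in the residual diagram.

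For $\overline{A}(\mc{O})$ I would push the same dictionary through the second round of stripping. In Proposition \ref{prop:abarrows} one removes the even-row pairs $r_{2l-1} = r_{2l-2} = \alpha$ and the odd-row pairs $r_{2l} = r_{2l-1} = \beta$; under the dictionary above this is precisely the removal of the even (resp.\ odd) column pairs $a'_{2l} = a'_{2l-1} = \mu$ from the already-stripped diagram, which gives $\overline{A}(\mc{O}) \cong (\bb{Z}/2\bb{Z})^q$ with the asserted $q$. I would also match the distinguished generators by comparing with the $S_{\mathrm{even}}/S_{\mathrm{odd}}$ description of Sommers recalled in the proof of Proposition \ref{prop:abarrows}; this step matters because the later Achar--Sommers computation depends on the choice of generators, not merely on the order of the group. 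Finally, the very even diagrams $(2\alpha_k, 2\alpha_k, \dots, 2\alpha_1, 2\alpha_1)$ in type $D$ are exactly those all of whose column pairs $\{a_{2m+1}, a_{2m}\}$ are equal and of even size, so both rounds of stripping delete the whole diagram (the degenerate case $p = q = -1$); there one simply invokes the classical fact (\cite{CM}, Section 6.1) that for the two very even orbits $A(\mc{O}_I)$, $A(\mc{O}_{II})$, and hence their Lusztig quotients, are trivial.

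The difficulty here is bookkeeping rather than conceptual. One has to keep straight three slightly different pairings of the column indices --- the parity pairing $\{a_{2l}, a_{2l-1}\}$, the first stripping pairing $\{a_{2m+1}, a_{2m}\}$, and the second stripping pairing $\{a'_{2l}, a'_{2l-1}\}$ --- together with the padding conventions fixing the parity of the number of columns in each type; and the genuinely delicate point is to confirm that the row/column dictionary matches the \emph{distinguished generators} of $\overline{A}(\mc{O})$, not just the abstract isomorphism type of the group.
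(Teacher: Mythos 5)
Your plan is correct, and it takes a genuinely different route from the paper. The paper's proof is very short: it cites Section 2 of \cite{W3} for the Type $C_n$ case, remarks that Types $B_n$ and $D_n$ "can be done similarly," and then gives an alternative argument for Type $D_n$ by observing that removing the longest column of (a suitable truncation of) a Type $D$ orbit $\mc{P}$ produces a Type $C$ orbit $\mc{Q}$, and then checking directly from Sommers' $S_{\mathrm{odd}}/S_{\mathrm{even}}$ recipe that $\overline{A}(\mc{P}) = \overline{A}(\mc{Q})$ in both parity subcases. Your approach instead takes the row-based description of $A(\mc{O})$ and $\overline{A}(\mc{O})$ in Proposition \ref{prop:abarrows} as the known input and translates the row-stripping procedure into the column-stripping procedure via the transpose dictionary, treating all three types uniformly. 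What your approach buys is self-containment --- it derives Proposition \ref{prop:abar} from data already in this paper rather than from the external reference \cite{W3} --- at the cost of nontrivial index bookkeeping which you have flagged but not carried out; what the paper's approach buys is brevity and a slick "remove the longest column" reduction from $D$ to $C$ that sidesteps most of that bookkeeping.

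Two small cautions. First, note that the stripping pairings genuinely shift parity between rounds: the $\nu$-pairs are read off as $(a_{2m+1}, a_{2m})$ while the $\mu$-pairs are read off as $(a'_{2l}, a'_{2l-1})$, which is an offset pairing on the \emph{residual} sequence; you identify this as one of the "three pairings," but it is the point at which a sloppy transpose argument would most easily go wrong, so an actual proof should spell it out. Second, your remark about matching distinguished generators, while correct in spirit and indeed relevant to Section \ref{subsec:generators} and the later Achar--Sommers computation, is not needed to establish the proposition as stated, since the proposition asserts only the isomorphism type of $A(\mc{O})$ and $\overline{A}(\mc{O})$ and not a choice of generators.
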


\begin{example} \mbox{}\\
(a) Let $\mc{O} = (9,7,5,5,3,2,2,2,2,0)$ be a special nilpotent orbit of Type $B_n$. Then $\mc{O} = (9,7,3,0)$ $\cup$ $(2,2)$ $\cup (5,5,2,2)$ and $A(\mc{O}) \cong (\bb{Z}/2\bb{Z})^2$, $\overline{A}(\mc{O}) \cong \bb{Z}/2\bb{Z}$.

\noindent (b) Let $\mc{O} = (6,4,4,2,2,2,2)$ be a special nilpotent orbit of Type $C_n$. Then $\mc{O} = (6)$ $\cup$ $\phi$ $\cup$ $(4,4,2,2,2,2)$ and $A(\mc{O}) =   \overline{A}(\mc{O}) = \{e\}$.

\noindent (c) Let $\mc{O} = (6,3,3,2,2,2,2,0)$ be a special nilpotent orbit of Type $D_n$. Then $\mc{O} = (6,2,2,0)$ $\cup$ $(3,3)$ $\cup$ $(2,2)$ and $A(\mc{O}) \cong (\bb{Z}/2\bb{Z})^2$,   $\overline{A}(\mc{O}) \cong \bb{Z}/2\bb{Z}$.
\end{example}

\begin{proof}
The proof for Type $C_n$ is given in Section 2 of \cite{W3}. The proofs for Type $B_n$ and $D_n$ can be done similarly. If fact, using the results of Type $C_n$, we can prove the proposition for Type $D_n$: Let
\begin{align*}
\mc{O} &= (a_{2k+1} \geq a_{2k} \geq \dots \geq a_{2l+2} \geq a_{2l+1} = a_{2q+1}'' \geq a_{2l} \geq \dots \geq a_0)\\
&= (a_{2q+1}'' \geq a_{2q}'' \geq \dots \geq a_0'')\cup (\mu_1, \mu_1, \dots, \mu_x, \mu_x) \cup (\nu_1, \nu_1, \dots, \nu_y, \nu_y)
\end{align*}
be a non-very even orbit of Type $D_n$. By the constructions in the Proposition, $(a_{2k+1} \geq a_{2k} \geq$ $\dots \geq a_{2l+2})$ must consist of column pairs $(\nu, \nu)$ of even sizes.

Recall the parametrization of $\overline{A}(\mc{O})$ in Section 5 of \cite{S2}. In fact, the size of $\overline{A}(\mc{O})$ depends only on the ordering of the odd rows of $\mc{O}$. Therefore, the columns $(a_{2k+1} \geq a_{2k} \geq$ $\dots \geq a_{2l+2})$ in $\mc{O}$ do not contribute to $\overline{A}(\mc{O})$ as in our Proposition. So we can reduce our study of $\overline{A}(\mc{O})$ to the orbit
$$\mc{P} = (a_{2q+1}'' = a_{2l+1} \geq a_{2l} \geq \dots \geq a_0).$$
Removing the longest column $a_{2q+1}'' = a_{2l+1}$ from $\mc{P}$, then $\mc{Q} = (a_{2l} \geq \dots \geq a_0)$ defines a special nilpotent orbit of Type $C_n$. Using the notations in \cite{S2}, we divide the even row sizes of $\mc{Q}$ into those appearing odd number of times and even number of times respectively:
\begin{align*}
S_{\mc{Q}, odd} &= \{ 2\alpha_{2m} > 2\alpha_{2m-1} > \dots > 2\alpha_1 \}\ \text{,putting }\alpha_1 = 0\text{ if necessary};\\
S_{\mc{Q}, even} &= \{ 2\beta_l > 2\beta_{l-1} > \dots > 2\beta_1 \}.
\end{align*}
Then $\overline{A}(\mc{Q})$ is generated by all $2\alpha_{2r-1}$ such that $\alpha_{2r-1} \neq 0$, and all $2\beta_s$ satisfying $2\alpha_{2r+1} > 2\beta_s > 2\alpha_{2r}$.

We now look at the odd row sizes of $\mc{P}$. If $\alpha_1 = 0$, then
\begin{align*}
S_{\mc{P}, odd} &= \{ 2\alpha_{2m}+1 > 2\alpha_{2m-1}+1 > \dots > 2\alpha_1+1 = 1 \};\\
S_{\mc{P}, even} &= \{ 2\beta_l+1 > 2\beta_{l-1}+1 > \dots > 2\beta_1+1 \}.
\end{align*}
On the other hand, if $\alpha_1 \neq 0$, then
\begin{align*}
S_{\mc{P}, odd} &= \{ 2\alpha_{2m}+1 > 2\alpha_{2m-1}+1 > \dots > 2\alpha_1+1 \};\\
S_{\mc{P}, even} &= \{ 2\beta_l > 2\beta_{l-1} > \dots > 2\beta_1 > 1 \}.
\end{align*}
Following the parametrization of $\overline{A}(\mc{P})$ for Type $D$ orbits in \cite{S2}, one can see that $\overline{A}(\mc{Q}) = \overline{A}(\mc{P})$ in both cases. This matches with the statement of the Proposition.
\end{proof}

\subsection{Generators of the Lusztig quotient} \label{subsec:generators}
Using the notations of Section 3 in \cite{S3}, we describe a choice of generators of $\overline{A}(\mc{O}) \cong (\bb{Z}/2\bb{Z})^q$ explicitly:

For Type $B_n$ and Type $D_n$ orbits, elements in $A(\mc{O})$ are parametrized by an even number of commuting order 2 elements $b_k$'s, with $k$ equal to $a_{2q+1}''$, $a_{2q-1}''$, $\dots$, $a_{1}''$ or $\mu_1$, $\dots$, $\mu_x$. By Proposition 4 of \cite{S2}, if $a_{2i+1}'' \geq \mu_w \geq a_{2i-1}''$, then $b_{a_{2i+1}''}b_{\mu_w}$ descends to the trivial element in $\overline{A}(\mc{O})$. So the elements $\theta_{i} := b_{a_{2i+1}''}b_{a_{2i-1}''}$, $i = q, \dots, 1$ are generators of $\overline{A}(\mc{O})$.

For Type $C_n$ orbits,  elements in $A(\mc{O})$ are parametrized by a collection of commuting order 2 elements $b_k$'s, with $k$ equal to $a_{2q-1}''$, $a_{2q-3}''$, $\dots$, $a_1''$ or $\mu_1$, $\dots$, $\mu_x$. If $\mu_w \geq a_{2q}''$, then $b_{\mu_w}$ descends to the trivial element in $\overline{A}(\mc{O})$. Also, if
$a_{2i+1}'' \geq \mu_w \geq a_{2i-1}''$, then $b_{a_{2i+1}''}b_{\mu_w}$ descends to the trivial element in $\overline{A}(\mc{O})$. So the elements $\theta_q := b_{a_{2q-1}''}$, $\theta_{i} := b_{a_{2i+1}''}b_{a_{2i-1}''}$, $i = q-1, \dots, 1$ are generators of $\overline{A}(\mc{O})$. Our choice of generators is compatible to that in \cite{S3}.

Using our generators of $\overline{A}(\mc{O})$, one can define generators of irreducible representations $\varphi_i$ of $\overline{A}(\mc{O})$ by setting $\varphi_i(\theta_j)$ $:= (-1)^{\delta_{ij}}$ for all $i$ and $j$. On the other hand, if $A(\mc{O}) = \overline{A}(\mc{O})$, then Section 5.3 of \cite{B2} used the notation $((a_{2q-1}'')_{\epsilon_q}(a_{2q-3}'')_{\epsilon_{q-1}}\dots (a_{1}'')_{\epsilon_1})$, with $\epsilon_i \in \{ +,- \}$ to describe all irreducible representations of $\overline{A}(\mc{O})$ and all special unipotent representations attached to $\mc{O}$. We identify
\begin{align} \label{eq:identify}
\varphi := \prod_{i \in I} \varphi_i \longleftrightarrow ((a_{2q-1}'')_{\gamma_q} (a_{2q-3}'')_{\gamma_{q-1}} \dots (a_{1}'')_{\gamma_1})
\end{align}
where $I$ is a subset of $\{q,q-1,\dots,1\}$, with $\gamma_i = -$ if $i \in I$ and $\gamma_j = +$ if $j \notin I$. By Theorem 0.4 of \cite{Lu} and Propositions 4.14-4.16 of \cite{BV1985}, the above identification is natural in the sense that they parametrize the same unipotent representation, i.e.
\[
X_{\mc{O},\varphi} = \pi((a_{2q-1}'')_{\gamma_q}(a_{2q-3}'')_{\gamma_{q-1}}\dots (a_{1}'')_{\gamma_1})
\]
for all $\varphi \in \widehat{\overline{A}(\mc{O})}$. Moreover, the main result in Section 5.3 of \cite{B2} implies that $\pi((a_{2q-1}'')_{\gamma_q}$ $(a_{2q-3}'')_{\gamma_{q-1}}$ $\dots$ $(a_{1}'')_{\gamma_1})$ is equal to $R(\mc{O},\varphi)$. Therefore, we have $X_{\mc{O},\pi} \cong R(\mc{O},\pi)$ for all special orbits satisfying $A(\mc{O}) \cong \overline{A}(\mc{O})$.

\begin{example} \label{eg:unip}
Let $\mc{O} = (4,4,2,2,0) \cup \phi \cup \phi$ be a nilpotent orbit of Type $C_{6}$. We follow the recipe in \cite[Section 3]{W3} to find the special unipotent representations attached to $\mc{O}$. Firstly, its Spaltenstein-Lusztig dual is given by $\mc{O}^{\vee} = [5,3,3,1,1]$ and the infinitesimal character of $X_{\mc{O},\pi}$ is given by
$$\frac{1}{2}h^{\vee} = (2,1,1,0; 1,0).$$
On the other hand, the {\bf special piece} attached to $\mc{O}$ (\cite{Lu}) contains the orbits
$$\mc{O}_1 = (4,4,2,2,0);\ \ \mc{O}_2 = (5,3,2,2,0);\ \ \mc{O}_3 = (4,4,3,1,0);\ \ \mc{O}_4 = (5,3,3,1,0).$$
By Theorem 0.4 and Section 1 of \cite{Lu}, these orbits correspond to the elements
$$[e, 1];\ \ [\theta_2, 1];\ \ [\theta_1, 1];\ \ [\theta_2\theta_1, 1]$$
in $M(\mc{O}) := \overline{A}(\mc{O}) \times \widehat{\overline{A}(\mc{O})}$ respectively under the notations of \cite[Definition 4.6]{BV1985}.

For all $\mc{O}_i$, their corresponding Springer representations are given by $\sigma_i = j_{W_i}^{W(C_{6})}(\mathrm{sgn})$ for $1 \leq i \leq 4$, where
$$W_1 = C_2 \times D_2 \times C_1 \times D_1;\ \ W_2 = D_3 \times C_1 \times C_1 \times D_1;$$
$$W_3 = C_2 \times D_2 \times D_2 \times C_0;\ \ W_4 = D_3 \times C_1 \times D_2 \times C_0.$$
Therefore, the special unipotent representations are given by
$$X_{\mc{O},\mathrm{triv}} = \frac{1}{4}(R_e + R_{\theta_{2}} + R_{\theta_{1}} + R_{\theta_{2}\theta_{1}});\ \ X_{\mc{O},\varphi_2} = \frac{1}{4}(R_e - R_{\theta_{2}} + R_{\theta_{1}} - R_{\theta_{2}\theta_{1}});$$
$$X_{\mc{O},\varphi_1} = \frac{1}{4}(R_e + R_{\theta_{2}} - R_{\theta_{1}} - R_{\theta_{2}\theta_{1}});\ \ X_{\mc{O},\varphi_2\varphi_1} = \frac{1}{4}(R_e - R_{\theta_{2}} - R_{\theta_{1}} + R_{\theta_{2}\theta_{1}}),$$
with
\begin{align*}
R_e &= \sum_{w \in W(C_2 \times D_2 \times C_1 \times D_1)} \mathrm{sgn}(w)Ind_T^G((21;10;1;0)-w(21;10;1;0));\\
R_{\theta_{2}} &= \sum_{w \in W(D_3 \times C_1 \times C_1 \times D_1)} \mathrm{sgn}(w)Ind_T^G((210;1;1;0)-w(210;1;1;0));\\
R_{\theta_{1}} &= \sum_{w \in C_2 \times D_2 \times D_2 \times C_0)} \mathrm{sgn}(w)Ind_T^G((21;10;10)-w(21;10;10));\\
R_{\theta_{2}\theta_{1}} &= \sum_{w \in W(D_3 \times C_1 \times D_2 \times C_0)} \mathrm{sgn}(w)Ind_T^G((210;1;10)-w(210;1;10)).
\end{align*}
It is easy to see that
$$X_{\mc{O},\mathrm{triv}} = \pi(4_+2_+),\ \ X_{\mc{O},\varphi_2} = \pi(4_-2_+),\ \ X_{\mc{O},\varphi_1} = \pi(4_+2_-),\ \ X_{\mc{O},\varphi_2\varphi_1} = \pi(4_-2_-)$$
as observed in the paragraph prior to this example.

Moreover, Equations (39)-(40) of \cite{B2} suggests that $X_{\mc{O},\pi} = R(\mc{O},\pi)$ for all irreducible representations $\pi$ of $\overline{A}(\mc{O})$: For example, Equation (39) of {\it loc. cit.} says that $\pi(4_-2_-) = R(\mc{O},\rho)$, where $\rho(b_4) = -1$ and $\rho(b_2) = 1$ (note that the notations for $R(\mc{O},\rho)$ in {\it loc. cit.} are in terms of rows, whereas our notations $b_k$ are in terms of columns). That is, $\rho(\theta_2) = \rho(b_4) = -1$, $\rho(\theta_1) = \rho(b_4b_2) = -1$, i.e. $\rho = \varphi_2\varphi_1$ and hence $X_{\mc{O},\varphi_2\varphi_1} = \pi(4_-2_-) = R(\mc{O},\varphi_2\varphi_1)$. We note that the same results can also be obtained using Section 4.2 of \cite{S3}.
\end{example}

\section{Lusztig-Vogan Bijection} \label{sec:A}
\subsection{Vogan's Conjecture for classical nilpotent orbits} \label{subsec:generalize}
In this subsection, we study how Theorem \ref{thm:unipclassical} can be extended to orbits with $A(\mc{O}) \neq \overline{A}(\mc{O})$. Using the descriptions of special nilpotent orbits in Proposition \ref{prop:abar}, these are the orbits $\mc{O}$ {\it with} the $\mu_i$ columns. In order to prove an analogous result of Theorem \ref{thm:unipclassical} for these orbits, it suffices to consider $\mc{O}$ {\it without} the $\nu_j$ columns as in Section 2 of \cite{B2}. In other words, we study orbits of the form
$$\mc{O} = \mc{O}'' \cup (\mu_1, \mu_1, \dots, \mu_x, \mu_x) \cup \phi,$$
where $\mc{O}'' = \mc{O}'' \cup \phi \cup \phi$ is a special nilpotent orbit without the $\mu$ or $\nu$ entries. In this case, $\mc{O}$ is induced from $\mc{O}''$ by
$$\mc{O} = Ind_{\mf{g}'' \oplus \mf{gl}(\mu_1) \oplus \dots \oplus \mf{gl}(\mu_x)}^{\mf{g}} (\mc{O}'' \oplus 0 \oplus \dots \oplus 0),$$
and $A(\mc{O}) \cong \overline{A}(\mc{O}) \times (\bb{Z}/2\bb{Z})^x$, so every irreducible representation $\rho$ of $A(\mc{O})$ can be represented as $\rho = \pi \boxtimes \chi_1' \boxtimes \dots \boxtimes \chi_x'$, where $\pi$ is an irreducible representation of $\overline{A}(\mc{O}) \cong \overline{A}(\mc{O}'') \cong A(\mc{O}'')$ and $\chi_i'$ is either $\mathrm{triv}$ or $\mathrm{sgn}$.

\begin{lemma}
Let $\mc{O}^+ = \mc{O}^{\Delta} \cup (\mu_1, \mu_1, \dots, \mu_x, \mu_x) \cup \phi$, where $\mc{O}^{\Delta}$ is a triangular orbit given in Section 9 of \cite{BV1985}. Then $X_{\mc{O}^+,\pi} \cong R(\mc{O}^+, \pi \boxtimes \mathrm{triv}^x)$, where $\mathrm{triv}^x$ is the shorthand for $\overbrace{\mathrm{triv} \boxtimes \dots \boxtimes \mathrm{triv}}^{x\ terms}$.
\end{lemma}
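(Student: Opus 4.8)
The plan is to deduce the statement from Theorem~\ref{thm:unipclassical} by propagating the equality $X_{\mc{O}^{\Delta},\pi} \cong R(\mc{O}^{\Delta},\pi)$ through the orbit induction $\mc{O}^+ = Ind_{\mf{g}^{\Delta}\oplus \mf{gl}(\mu_1)\oplus\dots\oplus\mf{gl}(\mu_x)}^{\mf{g}}(\mc{O}^{\Delta}\oplus 0\oplus\dots\oplus 0)$. Since $\mc{O}^{\Delta}$ is triangular, one has $A(\mc{O}^{\Delta}) = \overline{A}(\mc{O}^{\Delta})$, so Theorem~\ref{thm:unipclassical} applies to $\mc{O}^{\Delta}$ verbatim and gives $X_{\mc{O}^{\Delta},\pi}|_{K_{\bb{C}}} \cong R(\mc{O}^{\Delta},\pi)$ with the representation of the component group unchanged, as recorded in Section~\ref{subsec:generators}. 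The two ingredients I then need are: on the representation side, that $X_{\mc{O}^+,\pi}$ is obtained from $X_{\mc{O}^{\Delta},\pi}$ by the Barbasch--Vogan parabolic (cohomological) induction that produces special unipotent representations of $\mf{g}$ out of those of a Levi, here inducing $X_{\mc{O}^{\Delta},\pi}\boxtimes\mathrm{triv}_{GL(\mu_1)}\boxtimes\dots\boxtimes\mathrm{triv}_{GL(\mu_x)}$; and on the geometric side, that $R(\mc{O}^+,\pi\boxtimes\mathrm{triv}^x)$ is the image under the same induction of $R(\mc{O}^{\Delta},\pi)$ tensored with the coordinate rings of the zero orbits of the $\mf{gl}(\mu_i)$, because $\pi\boxtimes\mathrm{triv}^x$ is pulled back from $\overline{A}(\mc{O}^+)\cong\overline{A}(\mc{O}^{\Delta})$ and the corresponding cover of $\mc{O}^+$ is precisely the one birationally induced from a cover of $\mc{O}^{\Delta}$.

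Concretely, the steps I would carry out are: (1) verify $A(\mc{O}^{\Delta}) = \overline{A}(\mc{O}^{\Delta})$ from the column description in Proposition~\ref{prop:abar}, so that Theorem~\ref{thm:unipclassical} together with the identification of Section~\ref{subsec:generators} gives $X_{\mc{O}^{\Delta},\pi} \cong R(\mc{O}^{\Delta},\pi)$; (2) check, using the infinitesimal character $\tfrac12 h^{\vee}$ of the relevant orbits and the explicit form of triangular unipotent representations in Section~9 of \cite{BV1985}, that cohomologically inducing $X_{\mc{O}^{\Delta},\pi}\boxtimes\mathrm{triv}^x$ from the parabolic with Levi $\mf{g}^{\Delta}\oplus\bigoplus_i\mf{gl}(\mu_i)$ is irreducible and has associated variety $\overline{\mc{O}^+}$ with the correct local system, hence equals $X_{\mc{O}^+,\pi}$; (3) match this with the geometry: the induced orbit cover attached to $H = r^{-1}(\overline{A}(\mc{O}^+))\leq A(\mc{O}^+)$, which is exactly the cover recording $\pi\boxtimes\mathrm{triv}^x$, is the one obtained by the parabolic induction $G\times_P(\overline{\mc{O}^{\Delta}}\times\{0\})$ up to finite cover, the induction being birational precisely because $\mathrm{triv}^x$ annihilates the extra $(\bb{Z}/2\bb{Z})^x$ factor of $A(\mc{O}^+)$; (4) invoke the compatibility of cohomological induction with restriction to $K_{\bb{C}}$ and with taking global sections of equivariant bundles, which is the mechanism already used in Section~2 of \cite{B2}, so that applying $Ind_P^G$ to both sides of step (1) yields $X_{\mc{O}^+,\pi}|_{K_{\bb{C}}} \cong R(\mc{O}^+,\pi\boxtimes\mathrm{triv}^x)$.

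The main obstacle I anticipate is step (4): showing that the $K_{\bb{C}}$-module underlying the cohomologically induced unipotent representation genuinely computes $H^0$ of the induced equivariant bundle on the orbit cover, i.e. that the higher cohomology vanishes and that no boundary contribution appears when passing from $\overline{\mc{O}^+}$ to $\mc{O}^+$. This needs the fact that the induction $Ind_{\mf{g}^{\Delta}\oplus\bigoplus_i\mf{gl}(\mu_i)}^{\mf{g}}$ becomes birational onto its image once $\pi\boxtimes\mathrm{triv}^x$ is fixed, together with a normality and cohomology-vanishing input for $\mc{O}^+$ and its cover; I would reduce these to the corresponding statements for $\mc{O}^{\Delta}$, which are available since triangular orbits are well understood, via a standard argument with the fibration $G\times_P(\,\cdot\,)\to G/P$. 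A secondary technical point is the bookkeeping of component groups: one must confirm that, under $A(\mc{O}^+)\cong\overline{A}(\mc{O}^+)\times(\bb{Z}/2\bb{Z})^x$, the representation $\pi$ of $\overline{A}(\mc{O}^+)$ corresponds on the Levi to $\pi$ of $\overline{A}(\mc{O}^{\Delta})$ and the $\mathrm{triv}$ factors to the trivial local systems on the zero $\mf{gl}(\mu_i)$-orbits, which is exactly where the hypothesis that only the $\mu$-columns (and no $\nu$-columns) are present enters.
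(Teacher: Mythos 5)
Your proposal is correct in outline but takes a genuinely different route from the paper, and the route you chose makes the hardest step even harder than necessary. Both you and the paper begin by invoking Section 8 of \cite{BV1985} to write $X_{\mc{O}^+,\pi}$ as the parabolic induction of $X_{\mc{O}^{\Delta},\pi}\boxtimes\mathrm{triv}^x$ from the Levi $G'\times GL(\mu_1)\times\dots\times GL(\mu_x)$. From there, however, the paper does \emph{not} appeal to Theorem \ref{thm:unipclassical} to replace $X_{\mc{O}^{\Delta},\pi}$ by $R(\mc{O}^{\Delta},\pi)$. Instead it uses the explicit structure of triangular unipotent representations from Section 9 of \cite{BV1985}: $X_{\mc{O}^{\Delta},\pi}$ is itself a parabolic induction of one-dimensional characters, so by induction in stages $X_{\mc{O}^+,\pi}$ collapses into a single $Ind_P^G$ of a one-dimensional representation from a maximal parabolic. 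Then it cites Section 4.2 of \cite{S3}, which gives $R(\mc{O}^+,\pi\boxtimes\mathrm{triv}^x)$ \emph{as exactly that same explicit parabolic induction}, and the comparison is immediate at the level of characters. In contrast, your plan is to push the known isomorphism $X_{\mc{O}^{\Delta},\pi}\cong R(\mc{O}^{\Delta},\pi)$ through the induction functor and then argue that $Ind_P^G$ carries $R(\mc{O}^{\Delta},\pi)\boxtimes\mathrm{triv}^x$ to $R(\mc{O}^+,\pi\boxtimes\mathrm{triv}^x)$; this is conceptually appealing but it recreates, in harder form, precisely the fact that \cite{S3} supplies ready-made.

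The ``main obstacle'' you flag in step (4) is therefore a real gap as written, not a routine verification: the statement that parabolic induction of equivariant sheaves commutes with taking global sections, with no higher cohomology and no boundary contribution from $\overline{\mc{O}^+}\setminus\mc{O}^+$, is a substantive geometric input. It requires birationality of the induction map onto the closure of the relevant cover of $\mc{O}^+$, normality of that cover, and a vanishing theorem, none of which is automatic. This is exactly the content that the citation to Section 4.2 of \cite{S3} packages away for the concrete one-dimensional inducing data used by the paper; if you insist on starting from the $(\mf{g},K)$-module $R(\mc{O}^{\Delta},\pi)$ rather than from a character, you would need to either re-derive a version of Sommers' result for this inducing datum or cite a general statement on birational induction of orbit covers (in the spirit of \cite{McG1}). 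A smaller terminological point: in the complex setting this is ordinary real parabolic induction, not cohomological induction; writing ``cohomological'' is harmless for even orbits but may confuse the reader about whether derived functors intervene. Finally, the paper does explicitly record that $\mc{O}^+$ is even, which is what makes the inducing infinitesimal character integral and the induction well-behaved; your proposal should make that hypothesis visible before invoking the induction.
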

\begin{proof}
Note that $\mc{O}^+$ is an even orbit. By Section 8 of \cite{BV1985},
$$X_{\mc{O}^+,\pi} = Ind_{G' \times GL(\mu_1) \times \dots \times GL(\mu_x)}^G(X_{\mc{O}^{\Delta},\pi} \boxtimes \mathrm{triv}^x).$$
By the structure of $X_{\mc{O}^{\Delta},\pi}$ and induction in stages, $X_{\mc{O}^+,\pi}$ is isomorphic to:
\begin{align*}
&\text{Type }B_n:\ Ind_{SO(2k+1) \times GL(2k-1) \times \dots \times GL(1) \times GL(\mu_1) \times \dots \times GL(\mu_x)}^G(\mathrm{triv} \boxtimes \pi_{2k-1} \boxtimes \dots \boxtimes \pi_{1} \boxtimes \mathrm{triv}^{x});\\
&\text{Type }C_n:\ Ind_{GL(2k) \times \dots \times GL(2) \times GL(\mu_1) \times \dots \times GL(\mu_x)}^G(\pi_{2k} \boxtimes \dots \boxtimes \pi_{2} \boxtimes \mathrm{triv}^{x});\\
&\text{Type }D_n:\ Ind_{SO(2k+2) \times GL(2k) \times \dots \times GL(2) \times GL(\mu_1) \times \dots \times GL(\mu_x)}^G(\mathrm{triv} \boxtimes \pi_{2k} \boxtimes \dots \boxtimes \pi_{2} \boxtimes \mathrm{triv}^{x}),
\end{align*}
where every $\pi_i$ is either trivial or determinant representation depending on $\pi$. At the same time, Section 4.2 of \cite{S3} says the right hand side is equal to $R(\mc{O}^+, \pi \boxtimes \mathrm{triv}^x)$. So the result follows.
\end{proof}

With the above Lemma, one can generalize the results of Theorem \ref{thm:unipclassical} using the same arguments as in \cite{B2}: \\

\noindent {\it Proof of Theorem A}. Suppose $\mc{O} = \mc{O}'' \cup (\mu_1, \dots, \mu_x) \cup \phi$ is a special orbit, which we can induce $\mc{O}$ suitably to $\mc{O}^+ = \mc{O}^{\Delta} \cup (\mu_1, \dots, \mu_x) \cup \phi$. By \cite{V3}, $X_{\mc{O},\pi} = R(\mc{O},\rho_{\pi}) - Y_{\pi}$ for some genuine module $Y_{\pi}$ whose support is strictly smaller than $\overline{\mc{O}}$. Using Proposition 4.5.1 of \cite{B2}, we have
\begin{align} \label{eq:ind}
Ind(X_{\mc{O},\pi}) = Ind(R(\mc{O},\rho_{\pi})) - Ind(Y_{\pi}) = \sum_{\rho' \in Ind_{A(\mc{O})}^{A(\mc{O}^+)}(\rho_{\pi})} R(\mc{O}^+, \rho') - Z_{\pi} - Ind(Y_{\pi}),
\end{align}
On the other hand, one can check directly from the character formula that $Ind(X_{\mc{O},\pi}) = \sum_{\pi^+ \in Ind^{\overline{A}(\mc{O}^+)}_{\overline{A}(\mc{O})}(\pi)} X_{\mc{O}^+,\pi^+}$. According to the previous lemma, each summand of $Ind(X_{\mc{O},\pi})$ is isomorphic to $R(\mc{O}^+, \pi^+ \boxtimes \mathrm{triv}^x)$. By summing up Equation \eqref{eq:ind} for all irreducible representations $\pi$, and the linear independence of $\{ R(\mc{Q},\sigma)\ |\ \mc{Q} = G\cdot e\ \text{nilpotent orbit},\ \sigma \in \widehat{G^e}\}$ given in \cite{V1998}, we argue as in \cite{B2} that $Ind(Y_{\pi}) = 0$ and $Z_{\pi} = 0$ for all $\pi$.

To see that $\rho_{\pi} = \pi \boxtimes \mathrm{triv}^x$, we go back to Equation \eqref{eq:ind} again with $Z_{\pi} = Ind(Y_{\pi}) = 0$, i.e.
$$\sum_{\pi^+ \in Ind^{\overline{A}(\mc{O}^+)}_{\overline{A}(\mc{O})}(\pi)} R(\mc{O}^+, \pi^+ \boxtimes \mathrm{triv}^x) = \sum_{\rho' \in Ind_{A(\mc{O})}^{A(\mc{O}^+)}(\rho_{\pi})} R(\mc{O}^+, \rho').$$
By linear independence of the $R(\mc{O},\sigma)$'s, one must have $Ind^{\overline{A}(\mc{O}^+)}_{\overline{A}(\mc{O})}(\pi) \boxtimes \mathrm{triv}^x$ $= Ind^{A(\mc{O}^+)}_{A(\mc{O})}(\pi \boxtimes \mathrm{triv}^x)$ $= Ind_{A(\mc{O})}^{A(\mc{O}^+)}(\rho_{\pi})$. This forces $\rho_{\pi} = \pi \boxtimes \mathrm{triv}^x$ as required. \qed

Consequently, the map $\Psi$ `defined' in the Introduction is related to the Lusztig-Vogan map by
$$\Psi(\mc{O},\pi) = \Gamma(\mc{O}, \pi \boxtimes \mathrm{triv}^x),$$
where $\overline{A}(\mc{O})$ is realized as a quotient of $A(\mc{O})$ by omitting the last $x$ coordinates of $A(\mc{O})$. In the next two subsections, we will compute $\Psi$ explicitly for all special orbits and all irreducible representations $\pi$ of $\overline{A}(\mc{O})$.

\subsection{Proof of Theorem B for orbits without $\mu$'s and $\nu$'s} \label{subsec:nomunu}
We first write down $\Psi(\mc{O}'',\pi)$ for $\mc{O}''$ that does not contain any $\mu$'s or $\nu$'s by the description of special orbits in Proposition \ref{prop:abar}. Since $\overline{A}(\mc{O}'')$ is equal to $(\bb{Z}/2\bb{Z})^q$, we can write
$$\pi = \chi_q \boxtimes \chi_{q-1} \boxtimes \dots \boxtimes \chi_1,$$
where each $\chi_i$ is either $\mathrm{triv}$ or $\mathrm{sgn}$ on $\theta_{i} \in \overline{A}(\mc{O}'')$. Let $S$ be a subset of $\{1,2,\dots,q\}$ such that $\chi_s = \mathrm{sgn}$ for all $s \in S$ and $\chi_t = \mathrm{triv}$ for all $t \notin S$. Then we have the description of $\Psi(\mc{O}'',\pi)$ in the following proposition:

\begin{proposition} \label{prop:psinomunu} \mbox{}\\
\begin{itemize}
\item {\bf Type $B_n$:} Let $G = SO(2n+1,\bb{C})$ and
$$\mc{O}'' = (a_{2q+1}'' \geq a_{2q}'' \geq \dots \geq a_0'')\cup \phi \cup \phi$$
be a special nilpotent orbit of Type $B_n$ (such that $a_0'' = 0$) with $A(\mc{O}'') = \overline{A}(\mc{O}'')$. Then
$$\Psi(\mc{O}'',\pi)= (a_{2q+1}''-2,a_{2q+1}''-4,\dots,1) \cup \bigcup_{s \in S} A_s' \cup \bigcup_{t \notin S} B_t',$$
where\\
$A_s' = \begin{cases}
    (a_{2s}'',a_{2s}''-2,\dots,a_{2s-1}'',a_{2s-1}''-2,a_{2s-1}''-2,\dots,3,3,1,1) \text{ if }\ \frac{a_{2s}'' - a_{2s-1}''}{4} \in \mathbb{N}\\
    (a_{2s}'',a_{2s}''-2,\dots,a_{2s-1}''+2,a_{2s-1}''-1,a_{2s-1}''-1,\dots,2,2,0)\ \ \ \text{ otherwise }.
\end{cases}$\\
$B_t' = \begin{cases}
    (a_{2s}'',a_{2s}''-2,\dots,a_{2s-1}''+2,a_{2s-1}''-1,a_{2s-1}''-1,\dots,2,2,0) \text{ if }\ \frac{a_{2t}'' - a_{2t-1}''}{4} \in \mathbb{N}\\
    (a_{2s}'',a_{2s}''-2,\dots,a_{2s-1}'',a_{2s-1}''-2,a_{2s-1}''-2,\dots,3,3,1,1)\ \ \  \text{ otherwise }.
\end{cases}$

For example, if $\mc{O}'' = (11,9,5,3,1,0)$, then $A(\mc{O}'') = (\bb{Z}/2\bb{Z})^2$, generated by $\theta_{2} = b_{11}b_5$ and $\theta_{1} = b_5b_1$. Then
\begin{align*}
\Psi(\mc{O}'',\mathrm{triv} \boxtimes \mathrm{triv}) &= (9,7,5,3,1;\ \ 9,7,5,3,3,1,1;\ \ 3,0);\\
\Psi(\mc{O}'',\mathrm{triv} \boxtimes \mathrm{sgn}) &= (9,7,5,3,1;\ \ 9,7,5,3,3,1,1;\ \ 3,1);\\
\Psi(\mc{O}'',\mathrm{sgn} \boxtimes \mathrm{triv}) &= (9,7,5,3,1;\ \ 9,7,4,4,2,2,0;\ \ 3,0);\\
\Psi(\mc{O}'',\mathrm{sgn} \boxtimes \mathrm{sgn}) &= (9,7,5,3,1;\ \ 9,7,4,4,2,2,0;\ \ 3,1).
\end{align*}

\item {\bf Type $C_n$:} Let $G = Sp(2n,\bb{C})$ and
$$\mc{O}'' = (a_{2q}'' \geq a_{2q-1}'' \geq \dots \geq a_0'')\cup \phi \cup \phi$$ be a special nilpotent orbit of Type $C_n$ with $\overline{A}(\mc{O}'') = (\bb{Z}/2\bb{Z})^q$. Then
    $$\Psi(\mc{O}'',\pi)= \bigcup_{s \in S} A_s \cup \bigcup_{t \notin S} B_t \cup (a_0'',a_0''-2,\dots,2),$$
    where\\
$A_s = \begin{cases}
    (a_{2s}'',a_{2s}''-2,\dots,a_{2s-1}'',a_{2s-1}''-2,a_{2s-1}''-2,\dots,2,2,0) \text{ if }\ \frac{a_{2s}'' - a_{2s-1}''}{4} \in \mathbb{N}\\
    (a_{2s}'',a_{2s}''-2,\dots,a_{2s-1}''+2,a_{2s-1}''-1,a_{2s-1}''-1,\dots,3,3,1,1)\ \ \ \text{ otherwise }.
\end{cases}$\\
$B_t = \begin{cases}
    (a_{2t}'',a_{2t}''-2,\dots,a_{2t-1}''+2,a_{2t-1}''-1,a_{2t-1}''-1,\dots,3,3,1,1) \text{ if }\ \frac{a_{2t}'' - a_{2t-1}''}{4} \in \mathbb{N}\\
    (a_{2t}'',a_{2t}''-2,\dots,a_{2t-1}'',a_{2t-1}''-2,a_{2t-1}''-2,\dots,2,2,0)\ \ \  \text{ otherwise }.
\end{cases}$

For example, if $\mc{O}'' = (10,6,4,2,0)$, then $A(\mc{O}'') = (\bb{Z}/2\bb{Z})^2$, generated by $\theta_{2} = b_6$ and $\theta_{1} = b_2b_6$. Then
\begin{align*}
\Psi(\mc{O}'',\mathrm{triv} \boxtimes \mathrm{triv}) &= (10,8,5,5,3,3,1,1;\ \ 4,2,0);\\
\Psi(\mc{O}'',\mathrm{triv} \boxtimes \mathrm{sgn}) &= (10,8,5,5,3,3,1,1;\ \ 4,1,1);\\
\Psi(\mc{O}'',\mathrm{sgn} \boxtimes \mathrm{triv}) &= (10,8,6,4,4,2,2,0;\ \ 4,2,0);\\
\Psi(\mc{O}'',\mathrm{sgn} \boxtimes \mathrm{sgn}) &= (10,8,6,4,4,2,2,0;\ \ 4,1,1).
\end{align*}

\item {\bf Type $D_n$:} Let
$$\mc{O}'' = (a_{2q+1}'' \geq a_{2q}'' \geq \dots \geq a_0'') \cup \phi \cup \phi$$
be a special, non-very even nilpotent orbit of Type $D_n$ with $\overline{A}(\mc{O}) = (\bb{Z}/2\bb{Z})^q$. Then
$$\Psi(\mc{O}'',\pi)= (a_{2q+1}-2,\dots,2,0) \cup \bigcup_{s \in S} A_s \cup \bigcup_{t \notin S} B_t \cup (a_0'',a_0''-2,\dots,2),$$
where $A_s$, $B_t$ are the same as Type C above.\\
Suppose $\mc{O}_{I,II} = (2\alpha_k, 2\alpha_k, 2\alpha_{k-1}, 2\alpha_{k-1},\dots, 2\alpha_1, 2\alpha_1)_{I,II}$ are the very even orbits. Then $\overline{A}(\mc{O}_{I,II}) = 1$ and
\begin{align*}
 \Psi(\mc{O}_I,\mathrm{triv}) = &\bigcup_{1 \leq l \leq k} (2\alpha_l -1, 2\alpha_l -1,\dots, 3,3,1,1); \\
	\Psi(\mc{O}_{II},\mathrm{triv}) = &\bigcup_{1 \leq l \leq k-1} (2\alpha_l -1, 2\alpha_l -1,\dots, 3,3,1,1)\\
&\cup (2\alpha_k -1, 2\alpha_k -1,\dots, 3,3,1,-1).
\end{align*}
\end{itemize}
\end{proposition}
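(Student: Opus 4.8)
The plan is to reduce $\Psi(\mc{O}'',\pi)$ to a combinatorial problem about an alternating sum of induced characters and then to carry out the cancellation in that sum. Since $\mc{O}''$ satisfies $A(\mc{O}'')=\overline{A}(\mc{O}'')$, Section \ref{subsec:generators} (via Theorem 0.4 of \cite{Lu}, Propositions 4.14--4.16 of \cite{BV1985} and Section 5.3 of \cite{B2}) together with Theorem A gives $X_{\mc{O}'',\pi}\cong R(\mc{O}'',\pi)$ and $\Psi(\mc{O}'',\pi)=\Gamma(\mc{O}'',\pi)$, so it suffices to compute the maximal weight in \eqref{eq:lusztigvogan}. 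Exactly as in Example \ref{eg:unip}, one first writes
$$X_{\mc{O}'',\pi}=\frac{1}{|\overline{A}(\mc{O}'')|}\sum_{C}\varepsilon_\pi(C)\,R_C,\qquad R_C=\sum_{w\in W_C}\mathrm{sgn}(w)\,Ind_T^G(\tau_C-w\tau_C),$$
where $C$ runs over $\overline{A}(\mc{O}'')$ (via Lusztig's bijection \cite{Lu} with the orbits of the special piece of $\mc{O}''$), $\varepsilon_\pi(C)=\pi(C)\in\{\pm1\}$, $W_C$ is the explicit product of smaller classical Weyl groups carrying the Springer representation $\sigma_C=j^W_{W_C}(\mathrm{sgn})$, and $\tau_C$ is the concatenation of the $\rho$-vectors of the blocks of $W_C$. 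The first step is to make $W_C$, $\tau_C$ and $\varepsilon_\pi(C)$ completely explicit in terms of the entries $a_i''$ and the subset $S$ encoding $\pi$, via Proposition \ref{prop:abar}, the generators of Section \ref{subsec:generators}, and the block descriptions of \cite{B2}, \cite{S3}.

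Next, since $\tau_C=\rho_{W_C}$, a Weyl-denominator manipulation gives the closed form $R_C=\mathrm{sgn}(w_{0,C})\,Ind_T^G\!\big(e^{2\rho_{W_C}}\prod_{\alpha\in\Delta^+_{W_C}}(1-e^{-\alpha})\big)$, where $w_{0,C}$ is the longest element of $W_C$; expanding the product, the dominance-maximal weight occurring in $R_C$ is the dominant rearrangement of $2\rho_{W_C}$, with coefficient $\mathrm{sgn}(w_{0,C})$. On a type $B$ or $C$ block this is the usual odd/even staircase, whereas on a type $D$ block of rank $r$ one gets $2\rho_{D_r}$ (which has a trailing $0$) only when $-1\in W(D_r)$, i.e.\ $r$ even; this parity, together with the parity of $\ell(w_{0,C})$ (which is $\binom{r}{2}$ per $B$- or $D_r$-block and hence has period $4$ in $r$), is precisely what produces the case distinction $\frac{a_{2s}''-a_{2s-1}''}{4}\in\mathbb{N}$ in the statement. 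I would tabulate, for each $C$, this maximal weight and the first few weights below it in dominance order, together with their signed coefficients.

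The heart of the argument is the cancellation in $\sum_C\varepsilon_\pi(C)R_C$. The dominant rearrangements of $2\rho_{W_C}$ for the larger orbits of the special piece all coincide (visible already in Example \ref{eg:unip}, where all four $R_\theta$ share the top weight $(4,2,2,2,0,0)$), and the accompanying signs $\varepsilon_\pi(C)\,\mathrm{sgn}(w_{0,C})$ come in cancelling pairs --- ``merging a box'' to pass from one special-piece orbit to another swaps a $C_2\times D_2$-type local configuration for a $D_3\times C_1$-type one and flips $\mathrm{sgn}(w_{0,C})$. This forces $\dim X_{\mc{O}'',\pi}=\dim\mc{O}''$ and kills the naive maximum, so one must follow the cancellation down to the first weight where the signs fail to pair up. I would do this by induction on the number $q$ of blocks: peeling off the longest column (in type $C$, the top pair $a_{2q}''\ge a_{2q-1}''$) exhibits the special piece and each $W_C$ as a ``product'' of the data of a smaller orbit with that of a rank-one or rank-two piece, so the alternating sum factors compatibly and the inductive hypothesis applies to the smaller factor. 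For the base case one may take $\pi=\mathrm{triv}$, where $\Psi(\mc{O}'',\mathrm{triv})$ is the content of \cite{W3}; replacing $\pi$ by $\pi\otimes\varphi_i$ flips $\varepsilon_\pi(C)$ exactly on the $C$ involving $\theta_i$, which by the block decomposition toggles the $i$-th segment of the surviving top weight between its $A_s$ and $B_t$ forms --- this is the $s\in S$ versus $t\notin S$ dichotomy.

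Finally, the very even orbits of type $D$ form a separate, easier case: there $\overline{A}(\mc{O}_{I,II})=1$ (Proposition \ref{prop:abar}), so $X_{\mc{O}_{I,II},\mathrm{triv}}$ is a single $R_C$ --- one checks $W_C=B_{\alpha_k}\times B_{\alpha_k}\times\cdots\times B_{\alpha_1}\times B_{\alpha_1}$, whose sorted $2\rho_{W_C}$ is $\bigcup_l(2\alpha_l-1,2\alpha_l-1,\dots,3,3,1,1)$, occurring with coefficient $\pm1$ since no cancellation can arise inside a single $R_C$; the difference between $\mc{O}_I$ and $\mc{O}_{II}$ is the sign ambiguity of the last coordinate under the outer automorphism of $SO(2n,\bb{C})$, which is what turns the trailing $1$ into $-1$. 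I expect the main obstacle to be the cancellation analysis of the previous paragraph: one must control $\sum_C\varepsilon_\pi(C)R_C$ so that every dominant weight with nonzero total coefficient is $\le$ the claimed $\Psi(\mc{O}'',\pi)$ while the claimed weight itself survives (this is also what makes $\Psi$ well-defined), all while tracking the $D$-block parities behind the $\bmod 4$ split and arranging the induction on $q$ so that the block structures of $W_C$ and of the special piece line up cleanly across the even/odd boundary and at the specially behaved longest column in type $D$.
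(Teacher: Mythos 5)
Your strategy --- write $X_{\mc{O}'',\pi}$ as the alternating sum $\frac{1}{|\overline{A}(\mc{O}'')|}\sum_C\pi(C)R_C$, use the Weyl denominator identity to read off the top weight $2\rho_{W_C}$ of each $R_C$ with sign $\mathrm{sgn}(w_{0,C})$, and then track cancellations block-by-block by induction on $q$ --- is in essence the same one the paper relies on. The paper's proof is a citation: Type $C_n$ is Section~3 and Remark~3.3 of \cite{W3}, which organizes exactly this cancellation analysis by pairing the $R_C$'s (as illustrated in Example~\ref{eg:unip2}) and invoking the spherical-orbit computations of Section~4 of \cite{W2} on each pair; Types $B_n$ and $D_n$ are asserted to follow by the same technique. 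Your ``peel off the longest column pair'' induction is essentially the same block decomposition, so this is not a genuinely different route.

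That said, several of your concrete supporting claims are wrong and would derail a serious attempt to carry the sketch through. First, $\ell(w_{0})$ for a $B_r$ or $C_r$ factor is $r^2$ and for a $D_r$ factor is $r(r-1)$, not $\binom{r}{2}$; consequently $\mathrm{sgn}(w_{0})$ has period $2$ (for $B/C$) or is identically $+1$ (for $D$), so the ``period $4$ in $r$'' you invoke to explain the $\frac{a_{2s}''-a_{2s-1}''}{4}\in\mathbb{N}$ split does not come from $\ell(w_{0,C})$ in the way you describe. Second, the claim that $2\rho_{D_r}$ ``has a trailing $0$ only when $-1\in W(D_r)$'' is a red herring: $\rho_{D_r}=(r-1,\dots,1,0)$ has last coordinate $0$ for all $r$, so $\rho-w_0\rho=2\rho$ and the trailing $0$ are independent of whether $w_0=-1$. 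Third, for the very even orbits the relevant $W_C$ is not $B_{\alpha_k}\times B_{\alpha_k}\times\cdots$; very even orbits are Richardson from a $GL$-type Levi, and the pseudo-Levi subgroup that appears in the single $R_e$ needs to be identified correctly before one can claim the sorted $2\rho_{W_C}$ is $\bigcup_l(2\alpha_l-1,2\alpha_l-1,\dots,1,1)$. Finally, and most importantly, the cancellation analysis that you correctly identify as ``the heart of the argument'' is not carried out: you observe that the naive top weight cancels, but you give no mechanism that controls the entire chain of cancellations down to the claimed surviving weight, nor that this surviving weight dominates all other uncancelled weights. This is exactly the content supplied by Section~4 of \cite{W2} in the paper's citation, and without it (or a replacement) the sketch does not yield the explicit formulas in the statement.
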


\begin{proof}
The character formulas of $X_{\mc{O}'',\pi}$ for all special nilpotent orbits $\mc{O}$ and all $\pi \in \widehat{\overline{A}(\mc{O})}$ of Type $C_n$ is given in Section 3 of \cite{W3}. The results of the above Proposition is given in Remark 3.3 of {\it loc. cit.} One can use the same technique in {\it loc. cit.} to obtain the character formulas for special nilpotent orbits in Type $B_n$ and $D_n$ (see Example \ref{eg:unip} below), along with the results of the above Proposition.
\end{proof}

\begin{example} \label{eg:unip2}
We go back to the character formulas obtained in Example \ref{eg:unip}, with $\mc{O} = (4,4,2,2,0)$ of Type $C_{6}$. To compute $\Psi(\mc{O},\mathrm{sgn} \boxtimes \mathrm{triv})$, one needs to find the maximal term appearing in
$$X_{\mc{O},\mathrm{sgn} \boxtimes \mathrm{triv}} = \frac{1}{4}(R_e - R_{\theta_{2}} + R_{\theta_{1}} - R_{\theta_{2}\theta_{1}}) = \frac{1}{2}[\frac{1}{2}(R_e - R_{\theta_{2}}) + \frac{1}{2}(R_{\theta_{1}} - R_{\theta_{2}\theta_{1}})].$$
By the results in Section 4 of \cite{W2}, the first 4 coordinates of both $\frac{1}{2}(R_e - R_{\theta_{2}})$ and $\frac{1}{2}(R_{\theta_{1}} - R_{\theta_{2}\theta_{1}})$ that gives the maximal norm are $(4,2,2,0)$. For the last 2 coordinates, one needs to find the maximal length element of
$$\frac{1}{2}[\sum_{w \in W(C_1 \times D_1)} \mathrm{sgn}(w)Ind_T^G((1;0)-w(1;0)) + \sum_{w \in W(D_2)} \mathrm{sgn}(w)Ind_T^G((10)-w(10))].$$
By applying results in Section 4 of {\it loc. cit.} once more, this is equal to $(1,1)$. So
$$\Psi(\mc{O},\mathrm{sgn} \boxtimes \mathrm{triv}) = (4,2,2,0; 1,1).$$

The calculations of non-very even Type $D_n$ orbits are similar, but the recipe for Type $B_n$ orbits is slightly more complicated, since the infinitesimal characters in this setting have half-integral coordinates. More details can be found in Section 5.1.2 of the Ph.D. thesis of the author \cite{W1}.
\end{example}

\subsection{Proof of Theorem B for all special nilpotent orbits} \label{subsec:gen}
We are now in the position to prove Theorem B for all special nilpotent orbits $\mc{O}$ of classical Type. Note that
$$\mc{O} = \mc{O}'' \cup (\mu_1, \mu_1, \dots, \mu_x, \mu_x) \cup (\nu_1, \nu_1, \dots, \nu_y, \nu_y),$$
where $\mc{O}''$ is as in Section \ref{subsec:nomunu}, and $\mc{O}$ is an induced orbit from $\mc{O}''$ of the form
$$\mc{O} = Ind_{\mf{g}'' \oplus \mf{gl}(\mu_1) \oplus \dots \oplus \mf{gl}(\mu_x) \oplus \mf{gl}(\nu_1) \oplus \dots \oplus \mf{gl}(\nu_y)}^{\mf{g}}(\mc{O}'' \oplus 0 \oplus \dots \oplus 0),$$
where $G$ and $G''$ are classical Lie groups of the same type with $\mc{O}'' \subset \mf{g}''$. By Proposition \ref{prop:abar}, $\overline{A}(\mc{O}) = \overline{A}(\mc{O}'')$. So the results in Section 8 of \cite{BV1985} applies and the special unipotent representations attached to $\mc{O}$ are given by
\begin{align} \label{eq:Opi}
X_{\mc{O},\pi} = Ind_{G'' \times GL(\mu_1) \times \dots \times GL(\mu_x) \times GL(\nu_1) \times \dots \times GL(\nu_y)}^{G}(X_{\mc{O}'',\pi} \boxtimes \mathrm{triv} \boxtimes \dots \boxtimes \mathrm{triv}).
\end{align}
\begin{theorem} \label{prop:psigeneral}
Let $\mc{O}$ be a special nilpotent orbit of classical type. Write $\mc{O}$ as
$$\mc{O} = \mc{O}'' \cup (\mu_1, \mu_1, \dots, \mu_x, \mu_x) \cup (\nu_1, \nu_1, \dots, \nu_y, \nu_y),$$
with $\mc{O}''$ to be a nilpotent orbit given in Section \ref{subsec:nomunu}. Then we have
$$\Psi(\mc{O}, \pi) = \Psi(\mc{O}'',\pi) \cup \bigcup_{u=1}^x K_{\mu_u} \cup \bigcup_{v=1}^y K_{\nu_v},$$
where $\Psi(\mc{O}'',\pi)$ is determined in Proposition \ref{prop:psinomunu}, and $K_j$ is given by
\begin{align}\label{eq:Kj}
K_j = \begin{cases}
(j-1,j-1,j-3,j-3,\dots,2,2,0)\ \ \ \text{when}\ $j$\ \text{is odd},\\
(j-1,j-1, j-3,j-3, \dots, 1,1)\ \ \ \text{when}\ $j$\ \text{is even}.
\end{cases}
\end{align}
\end{theorem}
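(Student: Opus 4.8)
The plan is to reduce the computation of $\Psi(\mc{O},\pi)$ for a general special orbit to the already-established case of $\Psi(\mc{O}'',\pi)$ in Proposition \ref{prop:psinomunu}, using the inductive structure of $X_{\mc{O},\pi}$ recorded in Equation \eqref{eq:Opi}. The key observation is that $X_{\mc{O},\pi}$ is obtained from $X_{\mc{O}'',\pi}$ by parabolic induction from a Levi $\mf{g}'' \oplus \mf{gl}(\mu_1) \oplus \dots \oplus \mf{gl}(\mu_x) \oplus \mf{gl}(\nu_1) \oplus \dots \oplus \mf{gl}(\nu_y)$, with the trivial representation on each general linear factor. So the first step is to understand, at the level of the expansion \eqref{eq:lusztigvogan}, how the maximal weight transforms under $Ind$ from such a parabolic: if $X_{\mc{O}'',\pi}$ has maximal term $\Psi(\mc{O}'',\pi)$ when restricted to $G''$, and each $GL(j)$-factor contributes its own ``maximal term'' coming from $R(0_{\mf{gl}(j)})$ — i.e. the ring of functions on the nilpotent cone, or rather on the zero orbit — then the maximal term of the induced module should be the concatenation (union of Young-diagram columns) of $\Psi(\mc{O}'',\pi)$ with these $GL$-contributions, suitably dominant-ordered. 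This is essentially an ``induction in stages plus highest-weight bookkeeping'' argument, and the combinatorial input is that inducing $R(0) = R(\mc{O}_{triv})$ on $\mf{gl}(j)$ up to $\mf{so}$ or $\mf{sp}$ produces a maximal weight with column shape $K_j$ as in \eqref{eq:Kj}.

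Concretely, I would proceed as follows. First, reduce to the case with no $\nu$'s, exactly as in Section \ref{subsec:generalize}: the $\nu_v$-columns and $\mu_u$-columns are treated by the same mechanism (both are $\mf{gl}$-factors carrying trivial orbit and trivial representation), the only distinction being whether they descend to a generator of $\overline{A}(\mc{O})$, which is irrelevant for the weight computation here. Second, analyze a single $GL(j)$ factor: compute the maximal term of $Ind_{G'' \times GL(j)}^{G'''}(X_{\mc{O}'',\pi} \boxtimes \mathrm{triv})$ where $G'''$ is the classical group one size up. The representation $\mathrm{triv}$ on $GL(j)$ corresponds via Equation \eqref{eq:lusztigvogan} (applied inside $GL(j)$) to $R(\mc{O}_{triv}^{\mf{gl}(j)})$, whose $\Gamma$-image is the zero weight, and one checks using Frobenius reciprocity / the behaviour of $Ind_T^G$ under parabolic induction that the maximal weight contributed is the ``$\rho$-shift'' of this factor inside the bigger group, which works out to $K_j$. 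Third, iterate over all $\mu_u$ and $\nu_v$ via induction in stages (Section 8 of \cite{BV1985} guarantees the induced module is again the relevant $X_{\mc{O},\pi}$, and the intermediate orbits are all special of the same type), accumulating the union $\bigcup K_{\mu_u} \cup \bigcup K_{\nu_v}$. Finally, verify that the resulting union of columns, reordered to be dominant, is genuinely the maximal term: this requires a norm-maximization argument in the sense of the $W$-orbit combinatorics used in \cite{W2}, \cite{W3} and in Example \ref{eg:unip2} — one must confirm that concatenating the already-maximal pieces yields a weight that is still maximal (no cross-cancellation lowers it, and no rearrangement increases it), which is where well-definedness of $\Psi$ on $\mc{O}$ also needs checking.

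The injectivity statement in Theorem B should then follow formally: $\Psi(\mc{O},\pi)$ determines $\mc{O}''$ together with $\pi$ (by injectivity of $\Psi$ restricted to the no-$\mu$-no-$\nu$ case, part of Proposition \ref{prop:psinomunu}) and it determines the multiset $\{K_{\mu_u}\} \cup \{K_{\nu_v}\}$, hence the $\mu$'s and $\nu$'s, hence $\mc{O}$; so one recovers $(\mc{O},\pi)$ from $\Psi(\mc{O},\pi)$. The main obstacle, I expect, is the last verification step in the previous paragraph: showing that the concatenation of the maximal terms of the tensor factors is still the global maximal term of the parabolically induced module. Parabolic induction of $Ind_T^G$-classes is not ``monotone'' in an obvious way — the expansion \eqref{eq:lusztigvogan} of an induced module involves a sum over Weyl-coset representatives and there can in principle be interference between the $G''$-part and the $GL$-parts that produces a larger weight than the naive concatenation, or cancels the naive one. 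Controlling this requires the explicit character-formula machinery of \cite{W2}, \cite{W3} (the ``maximal norm'' computations illustrated in Example \ref{eg:unip2}), applied uniformly across Types $B_n$, $C_n$, $D_n$ and across all $\pi \in \widehat{\overline{A}(\mc{O})}$, including the very even orbits where one tracks the sign on the last coordinate. I would isolate this as a lemma on ``maximal terms commute with induction from $\mf{gl}$-parabolics'' and prove it by the norm-maximization argument, then deduce the theorem and its injectivity as corollaries.
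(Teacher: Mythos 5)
Your proposal takes essentially the same route as the paper's proof: start from Equation \eqref{eq:Opi}, observe that the maximal term of $X_{\mc{O}'',\pi}$ is $\Psi(\mc{O}'',\pi)$, compute the maximal term $K_j$ contributed by the trivial representation on each $\mf{gl}(j)$ factor from the Weyl character formula, and combine by induction in stages. The paper's actual proof is terser than yours and takes as implicit the ``concatenation of maxima is still the maximum'' step (no cross-interference or cancellation) that you correctly isolate as the point needing justification, so your outline is, if anything, more careful on exactly that point.
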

\begin{proof}
We study the right hand side of Equation \eqref{eq:Opi}: The maximal term in $X_{\mc{O}'',\pi}$ is given by $\Psi(\mc{O}'',\pi)$. Also, by the Weyl character formula, the trivial representation of $GL(j)$ is written as
$$\sum_{w \in W(A_{j-1})} \text{sgn}(w) Ind_T^{GL(j)}((\frac{j-1}{2},\frac{j-3}{2},\dots,-\frac{j-1}{2})- w(\frac{j-1}{2},\frac{j-3}{2},\dots,-\frac{j-1}{2})),$$
where the largest term appearing inside the bracket is obtained when $w = w_0$, the longest element in $W(A_{j-1})$, and is equal to (a $W$-conjugate of) $K_{j}$. Using induction in stages upon Equation \eqref{eq:Opi}, the result follows.
\end{proof}

%%%%%%%%%%%%%%%%
%%%%%%%%%%%%%%%%
%%%SECTION 4%%%%
%%%%%%%%%%%%%%%%
%%%%%%%%%%%%%%%%

\section{Proof of the Achar-Sommers Conjecture}  \label{sec:AS}
\subsection{Proof of Theorem C}
In order to prove Theorem C, one needs to express $R(\widetilde{\mc{O}}^{C})$ in the form of Equation \eqref{eq:rocchar}. The proposition below gives precisely the formula of $R(\widetilde{\mc{O}}^{C})$:
\begin{proposition}  \label{prop:ROC}
Let $\mc{O}$ be a classical special nilpotent orbit in the form of Proposition \ref{prop:abar}. Given any conjugacy class $C_I := \prod_{i \in I}\theta_{i}$ in $\overline{A}(\mc{O})$,
$$R(\widetilde{\mc{O}}^{C_I}) = \bigoplus_{\pi = \chi_q \boxtimes \dots \boxtimes \chi_1,\  \chi_i = \mathrm{triv}\ \forall i\in I} X_{\mc{O},\pi}.$$
Consequently, the maximal term appearing in the expression of $R(\widetilde{\mc{O}}^{C_I})$ is given by
$$\max\{\Psi(\mc{O},\pi)\ |\ \pi = \chi_q \boxtimes \dots \boxtimes \chi_1,\ \chi_i = \mathrm{triv}\ \forall i\in I\}.$$
\end{proposition}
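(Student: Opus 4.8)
The plan is to establish the first displayed isomorphism and then read off the statement about maximal terms. For the isomorphism, the key input is Equation \eqref{eq:Opi}, which expresses $X_{\mc{O},\pi}$ as an induced representation from $X_{\mc{O}'',\pi} \boxtimes \mathrm{triv} \boxtimes \dots \boxtimes \mathrm{triv}$ on $G'' \times GL(\mu_1) \times \dots \times GL(\mu_x) \times GL(\nu_1) \times \dots \times GL(\nu_y)$. Dually, the orbit cover $\widetilde{\mc{O}}^{C_I}$ is built from the subgroup $H_{C_I} = r^{-1}(K_{C_I}) \leq A(\mc{O})$, where $K_{C_I} = \langle \theta_i \mid i \in I\rangle$. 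Since $A(\mc{O}) \cong \overline{A}(\mc{O}) \times (\bb{Z}/2\bb{Z})^x$ (with the last $x$ coordinates coming from the $\mu$-columns, as in Section \ref{subsec:generalize}) and $\overline{A}(\mc{O}) = \overline{A}(\mc{O}'')$, the first step is to identify $\widetilde{\mc{O}}^{C_I}$ as an orbit cover induced from the analogous cover $\widetilde{\mc{O}''}^{C_I}$ of $\mc{O}''$ together with trivial covers of the $\mf{gl}(\mu_u)$ and $\mf{gl}(\nu_v)$ pieces; concretely, $R(\widetilde{\mc{O}}^{C_I}) = Ind_{G'' \times \prod GL(\mu_u) \times \prod GL(\nu_v)}^G\bigl(R(\widetilde{\mc{O}''}^{C_I}) \boxtimes \mathrm{triv} \boxtimes \dots \boxtimes \mathrm{triv}\bigr)$ by compatibility of $R(\mc{O},-)$ with induction of orbits.

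Second, I would reduce to the case $\mc{O} = \mc{O}''$, i.e. prove $R(\widetilde{\mc{O}''}^{C_I}) = \bigoplus_{\pi = \chi_q \boxtimes \dots \boxtimes \chi_1,\ \chi_i = \mathrm{triv}\ \forall i \in I} X_{\mc{O}'',\pi}$. Here $A(\mc{O}'') = \overline{A}(\mc{O}'')$ when there are no $\mu$'s, so $H_{C_I} = K_{C_I}$ and $R(\widetilde{\mc{O}''}^{C_I}) = Ind_{G_{C_I}}^{G}(\mathrm{triv})$ with $G_{C_I}$ the isotropy group corresponding to $K_{C_I} \leq A(\mc{O}'')$. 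By Frobenius reciprocity at the level of the component group, $Ind_{G_{C_I}}^G(\mathrm{triv}) = \bigoplus_\rho (\dim \rho^{K_{C_I}})\, R(\mc{O}'', \rho)$, and since $A(\mc{O}'')$ is abelian the multiplicity $\dim \rho^{K_{C_I}}$ is $1$ precisely when $\rho$ is trivial on every $\theta_i$, $i \in I$, and $0$ otherwise. Combining this with Theorem A (which gives $R(\mc{O}'',\pi) = X_{\mc{O}'',\pi}$ for special orbits) yields the desired decomposition. Then pushing back through the induction of the first step, and using that $Ind$ of a direct sum is the direct sum of the $Ind$'s together with Equation \eqref{eq:Opi}, gives the formula for general $\mc{O}$.

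Third, the consequence about the maximal term is immediate: a direct sum of representations, each written in the form of Equation \eqref{eq:lusztigvogan}, has as its maximal weight the maximum (in the dominance order on $\Lambda^+(G)$) of the maximal weights of the summands, which is exactly $\max\{\Psi(\mc{O},\pi) \mid \pi = \chi_q \boxtimes \dots \boxtimes \chi_1,\ \chi_i = \mathrm{triv}\ \forall i \in I\}$. One small point to check is that this maximum is genuinely attained by a single $\pi$ — that there is no cancellation of leading terms between distinct summands — but this follows because all $X_{\mc{O},\pi}$ are genuine (honest, not virtual) modules, so all coefficients $m_\lambda(\mc{O},\pi)$ are nonnegative and no cancellation can occur in the direct sum.

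The main obstacle I anticipate is the first step: carefully matching the orbit-cover $\widetilde{\mc{O}}^{C_I}$ attached to $H_{C_I} \leq A(\mc{O})$ with the cover induced from $\widetilde{\mc{O}''}^{C_I}$, i.e. checking that the Sommers/Achar construction of orbit covers is compatible with the induction $\mc{O} = Ind(\mc{O}'' \oplus 0 \oplus \dots \oplus 0)$ and that the $\mu$- and $\nu$-columns contribute only trivial covers (equivalently, that the extra $(\bb{Z}/2\bb{Z})^x$ factor in $A(\mc{O})$ is entirely contained in $H_{C_I}$, since $K_{C_I}$ is pulled back from $\overline{A}(\mc{O})$). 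This is essentially bookkeeping with the results of \cite{S2} and Section 8 of \cite{BV1985}, but it is where the care is needed; the representation-theoretic Frobenius reciprocity argument in the second and third steps is routine once that identification is in place.
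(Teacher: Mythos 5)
Your overall strategy---decompose $R(\widetilde{\mc{O}}^{C_I})$ as a direct sum of $R(\mc{O},\rho)$ over irreducible representations $\rho$ of $A(\mc{O})$ trivial on $H_{C_I}$, then identify each $R(\mc{O},\rho)$ with the corresponding $X_{\mc{O},\pi}$ via Theorem A---is the paper's approach. But your first step, which reduces to a cover of the smaller orbit $\mc{O}''$ by invoking compatibility of $R(\mc{O},-)$ with orbit induction, is an unnecessary detour. The paper works directly with $\mc{O}$: since $H_{C_I}=r^{-1}(K_{C_I})=K_{C_I}\times(\bb{Z}/2\bb{Z})^x$ contains the entire $(\bb{Z}/2\bb{Z})^x$ factor (which you yourself observe at the end of your plan), the characters of $A(\mc{O})$ trivial on $H_{C_I}$ are exactly those of the form $\pi\boxtimes\mathrm{triv}^x$ with $\pi$ trivial on $K_{C_I}$. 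Your Frobenius-reciprocity argument for $Ind_{G_{C_I}}^G(\mathrm{triv})$ then goes through verbatim at the level of $\mc{O}$ itself, with no need to establish that Sommers' orbit covers behave well under the induction $\mc{O}=Ind(\mc{O}''\oplus 0\oplus\dots\oplus 0)$---a bookkeeping step the paper simply sidesteps.

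The genuine error is in your third step. You claim there is no cancellation of leading terms because the $X_{\mc{O},\pi}$ are honest modules, so the coefficients $m_\lambda(\mc{O},\pi)$ are nonnegative. That is false: Equation \eqref{eq:lusztigvogan} expands into the basis $Ind_T^G(e^\lambda)$, and the coefficients there lie in $\bb{Z}$ and are very often negative even for genuine modules. For instance, the building blocks $R_e=\sum_{w}\mathrm{sgn}(w)\,Ind_T^G(\cdots)$ in Example \ref{eg:unip} are alternating sums, so the unipotent representations $X_{\mc{O},\pi}$ assembled from them have $Ind_T^G$-expansions with coefficients of both signs; for $SL(2,\bb{C})$ one already has $V_0=Ind_T^G(e^0)-Ind_T^G(e^{2\rho})$. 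What actually rules out cancellation at the maximal weight is the injectivity of $\Psi$ asserted in Theorem B: for fixed $\mc{O}$, distinct $\pi$'s give distinct $\Psi(\mc{O},\pi)$'s, so exactly one summand carries a nonzero coefficient at $\lambda_0=\max_\pi\Psi(\mc{O},\pi)$, and no cancellation is possible there. Replace the nonnegativity claim with this appeal to Theorem B and the third step is fine.
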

\begin{proof}
We first study the case when $A(\mc{O}) = \overline{A}(\mc{O})$. By the description of $K_C \leq \overline{A}(\mc{O}) = A(\mc{O})$ in Section \ref{subsec:AS}, one has the following:
$$R(\widetilde{\mc{O}}^{C_I}) = \bigoplus_{\pi(k_C) = 1\ \text{for all}\ k_C \in K_C} R(\mc{O},\pi) = \bigoplus_{\pi = \chi_q \boxtimes \dots \boxtimes \chi_1,\  \chi_i = \mathrm{triv}\ \forall i\in I} R(\mc{O},\pi).$$
By Theorem \ref{thm:unipclassical}, $R(\mc{O},\pi) = X_{\mc{O},\pi}$ for all $\pi$'s, so the Proposition holds in this case.

Suppose now $A(\mc{O}) \neq \overline{A}(\mc{O})$, i.e. there exists column pairs of the form $(\mu,\mu)$ in Proposition \ref{prop:abar}. Let $\mc{O} = \mc{O}' \cup (\mu_1,\mu_1,\dots,\mu_x,\mu_x)$ such that $A(\mc{O}) = \overline{A}(\mc{O}) \times (\bb{Z}/2\bb{Z})^x$ as in Section \ref{subsec:generalize}. By the definition of $H_C = r^{-1}(K_C) = K_C \times (\bb{Z}/2\bb{Z})^x$,
$$R(\widetilde{\mc{O}}^{C_I}) = \bigoplus_{\pi(k_C) = 1\ \text{for all}\ k_C \in K_C} R(\mc{O},\pi \boxtimes \mathrm{triv}^x) = \bigoplus_{\pi = \chi_q \boxtimes \dots \boxtimes \chi_1,\  \chi_i = \mathrm{triv}\ \forall i\in I} R(\mc{O},\pi \boxtimes \mathrm{triv}^x).$$
By the results in Section \ref{subsec:generalize}, $R(\mc{O},\pi \boxtimes \mathrm{triv}^x) \cong X_{\mc{O},\pi}$, therefore the result follows.
\end{proof}

%\begin{remark}  \label{rmk:partition}
%We rewrite the above proposition, using the partition description of conjugacy classes and irreducible representations of $\overline{A}(\mc{O})$ as in Theorem \ref{thm:2}. The conjugacy class $C_I$ of $\overline{A}(\mc{O})$ with the single element $\prod_{i \in I} s_i$ is given by
%\begin{align}
%C_I = (\Pi_1,\Pi_2,\dots,\Pi_q),
%\end{align}
%where all $\Pi_l$'s are partitions of $2$, with $\Pi_i = \{2\}$ if $i \in I$, and $\Pi_j = \{1,1\}$ if $j \notin I$. Then the above Proposition can be written as: Given any conjugacy class $C_I = \prod_{i \in I}s_i$ in $\overline{A}(\mc{O})$, then
%$$R(\mc{O}_{C_I}) = \bigoplus_{C' \geq C_I \text{ in the partition ordering}} X_{\mc{O}, \pi_{C'}}.$$
%Consequently, the maximal element appearing in the formula of $R(\mc{O}_{C_I})$ is
%$$\max \{ \Psi(\mc{O},\pi_{C'})\ |\ C' \geq C_I\ \text{in the partition ordering} \}.$$
%\end{remark}
%Assuming Theorem \ref{thm:2} holds for classical Lie algebras, then the above remark will imply the Achar-Sommers conjecture. We will devote the rest of this section to the proof of Theorem \ref{thm:2}.
The following Lemma is essential in the proof of Theorem C:
\begin{lemma}  \label{lem:canonical}
The collection of $(\mc{O},C)$ that appears in Sommers' canonical preimage for all classical $\mf{g}$ is given as follows:

\noindent {\bf Type $B_n$:} Let $\mc{O}_B = (a_{2q+1}'' \geq a_{2q}'' \geq \dots \geq a_0'')\cup (\mu_1, \mu_1, \dots, \mu_x, \mu_x) \cup (\nu_1, \nu_1, \dots, \nu_y, \nu_y)$ be a special nilpotent orbit of Type $B_n$. Suppose $C_I = \prod_{i \in I}\theta_{i}$, then $(\mc{O}_B,C_I)$ is the canonical preimage of an orbit $\mc{O}_{B,I}^{\vee}$ iff the partition
$$[a_{2q+1}''-1] \cup \bigcup_{i\in I} [a_{2i}'', a_{2i-1}''] \bigcup_{j \notin I} [a_{2j}''+1,a_{2j-1}''-1] \cup [\mu_1, \mu_1, \dots, \mu_x, \mu_x] \cup [\nu_1, \nu_1, \dots, \nu_y, \nu_y]$$
defines a nilpotent orbit in ${}^L{\mf{g}} = \mf{sp}(2n,\bb{C})$ {\bf in terms of rows} (since every $a_j''$ is odd, this forces $a_{2i}''=a_{2i-1}''$ by Proposition \ref{prop:classrows}). Moreover, $\mc{O}_{B,I}^{\vee}$ is equal to the orbit with the above partition, and all orbits $\mc{O}^{\vee}$ in ${}^L{\mf{g}}$ can be expressed in this form.\\

\noindent {\bf Type $C_n$:} Let $\mc{O}_C = (a_{2q}'' \geq a_{2q-1}'' \geq \dots \geq a_0'')\cup (\mu_1, \mu_1, \dots, \mu_x, \mu_x) \cup (\nu_1, \nu_1, \dots, \nu_y, \nu_y)$ be a special nilpotent orbit of Type $C_n$. Suppose $C_I = \prod_{i \in I}\theta_{i}$, then $(\mc{O}_C,C_I)$ is the canonical preimage of an orbit $\mc{O}_{C,I}^{\vee}$ iff the partition
$$\bigcup_{i\in I} [a_{2i}'', a_{2i-1}''] \bigcup_{j \notin I} [a_{2j}''+1,a_{2j-1}''-1] \cup [a_0''+1]\cup [\mu_1, \mu_1, \dots, \mu_x, \mu_x] \cup [\nu_1, \nu_1, \dots, \nu_y, \nu_y]$$
defines a nilpotent orbit in ${}^L{\mf{g}} = \mf{so}(2n+1,\bb{C})$ {\bf in terms of rows} (since every $a_j''$ are even, this forces $a_{2i}''=a_{2i-1}''$ by Proposition \ref{prop:classrows}). Moreover, $\mc{O}_{C,I}^{\vee}$ is equal to the orbit with the above partition, and all orbits $\mc{O}^{\vee}$ in ${}^L{\mf{g}}$ can be expressed in this form.\\

\noindent {\bf Type $D_n$:} Let $\mc{O}_D = (a_{2q+1}'' \geq a_{2q}'' \geq \dots \geq a_0'')\cup (\mu_1, \mu_1, \dots, \mu_x, \mu_x) \cup (\nu_1, \nu_1, \dots, \nu_y, \nu_y)$ be a special, non-very even nilpotent orbit of Type $D_n$. Suppose $C_I = \prod_{i \in I}\theta_{i}$, then $(\mc{O}_D,C_I)$ is the canonical preimage of an orbit $\mc{O}_{D,I}^{\vee}$ iff the partition
$$[a_{2q+1}''-1] \cup \bigcup_{i\in I} [a_{2i}'', a_{2i-1}''] \bigcup_{j \notin I} [a_{2j}''+1,a_{2j-1}''-1] \cup [a_0''+1]\cup [\mu_1, \mu_1, \dots, \mu_x, \mu_x] \cup [\nu_1, \nu_1, \dots, \nu_y, \nu_y]$$
defines a nilpotent orbit in ${}^L{\mf{g}} = \mf{so}(2n,\bb{C})$ {\bf in terms of rows} (since every $a_j''$ is even, this forces $a_{2i}''=a_{2i-1}''$ by Proposition \ref{prop:classrows}). Moreover, $\mc{O}_{D,I}^{\vee}$ is equal to the orbit with the above partition, and all non-very even orbits $\mc{O}^{\vee}$ in ${}^L{\mf{g}}$ can be expressed in this form.

If $\mc{O}_{I,II} = (2\alpha_k, 2\alpha_k, 2\alpha_{k-1}, 2\alpha_{k-1},\dots, 2\alpha_1, 2\alpha_1)_{I,II}$ are the very even orbits, then $(\mc{O}_I,C_{\phi})$, $(\mc{O}_{II},C_{\phi})$ are both canonical preimages of the very even orbits with row sizes $[2\alpha_k$, $2\alpha_k$, $2\alpha_{k-1}$, $2\alpha_{k-1}$,$\dots$, $2\alpha_1$, $2\alpha_1]$.
\end{lemma}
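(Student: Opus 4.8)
The plan is to unwind Sommers' explicit description of the canonical preimage map $d\colon \mc{N}_{o,c} \twoheadrightarrow {}^L\mc{N}_o$ from \cite{S2} and match it term-by-term with the combinatorics of $\overline{A}(\mc{O})$ set up in Proposition \ref{prop:abar} and Section \ref{subsec:generators}. Recall that Sommers builds the preimage of an orbit $\mc{O}^\vee$ by a two-step procedure: first one collapses the partition of $\mc{O}^\vee$ into a partition of the appropriate type, producing a nilpotent orbit $\mc{O}$ in $\mf{g}$; then, recording which rows of $\mc{O}^\vee$ had to be altered during the collapse, one reads off a conjugacy class $C$ in $A(\mc{O})$ (hence in $\overline{A}(\mc{O})$, as this is abelian in the classical case). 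Concretely, in Type $B_n$ for instance, $\mc{O}^\vee$ is an orbit of $\mf{sp}(2n,\bb{C})$, so its partition is symplectic; the $\mf{so}(2n+1,\bb{C})$-collapse replaces a pair of consecutive rows $[a+1,a-1]$ (whose sizes are the "wrong" parity for Type $B$) by $[a,a]$, and each such replacement contributes a generator $\theta_i$ to the class $C$. The first step, then, is to write down exactly which rows get collapsed in each type, and to observe that a pair $[a_{2j}''+1, a_{2j-1}''-1]$ being left \emph{unaltered} corresponds to $j \notin I$, while a collapsed pair $[a_{2i}'',a_{2i-1}'']$ (necessarily $a_{2i}''=a_{2i-1}''$ once we know all the $a_j''$ are odd, resp.\ even) corresponds to $i \in I$. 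This is precisely the partition written in the statement.

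Second, I would verify that the extra fixed columns --- the leading $[a_{2q+1}''-1]$ in Types $B$ and $D$, the trailing $[a_0''+1]$ in Types $C$ and $D$, and the $[\mu_u,\mu_u]$ and $[\nu_v,\nu_v]$ pairs --- behave correctly. The $\mu$- and $\nu$-columns do not contribute to $\overline{A}(\mc{O})$ (Proposition \ref{prop:abar}) and indeed do not get collapsed, so they pass through $d$ unchanged; this matches the fact that they contribute identical column pairs $[\mu_u,\mu_u]$, $[\nu_v,\nu_v]$ to $\mc{O}^\vee$. The boundary columns $a_{2q+1}''$ and $a_0''$ are the "unpaired" extremes of $\mc{O}''$ and are handled by the $\pm 1$ shift that accounts for the difference in total size between $\mf{g}$ and ${}^L\mf{g}$ (e.g.\ $2n+1$ versus $2n$); one checks that these shifts are forced and that they always produce a partition of the correct size, and that they are exactly the $S_{odd}$/$S_{even}$ bookkeeping already invoked in the proof of Proposition \ref{prop:abar}. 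One then confirms surjectivity: every orbit $\mc{O}^\vee$ of ${}^L\mf{g}$ arises, because given any such partition one can group its rows into the leading/trailing singletons, the pairs, and the $\mu,\nu$-blocks, reconstruct $\mc{O}''$ by reversing the collapse (turning each surviving pair $[a,a]$ with $a$ odd/even back into $[a+1,a-1]$ as needed, or leaving it), and read off the subset $I$. The very-even case is separate but easy: a very-even partition of $\mf{so}(2n,\bb{C})$ is already of the right type, no collapse occurs, $C=C_\phi$, and the two labels $I,II$ are preserved; I would just cite the very-even bookkeeping of Proposition \ref{prop:class} and \cite{S2}.

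The main obstacle I anticipate is keeping the parity and indexing conventions exactly aligned across the three types simultaneously --- in particular making sure that "$C_I$ is the canonical preimage of \emph{something}" is equivalent to the displayed partition being a genuine orbit of ${}^L\mf{g}$ (this is where the constraint $a_{2i}''=a_{2i-1}''$ enters: if $a_{2i}''>a_{2i-1}''$ then $[a_{2i}'',a_{2i-1}'']$ has two distinct odd, resp.\ even, parts and fails the Type-$C$, resp.\ Type-$B/D$, partition condition, so no canonical preimage of that shape exists), and checking that Sommers' canonicity --- the requirement that the collapse be the \emph{minimal} one, i.e.\ that one never collapses a pair that is already of the right parity --- corresponds precisely to our requirement that the $a_j''$ listed are exactly the ones contributing generators $\theta_i$ to $\overline{A}(\mc{O})$, with no redundancy. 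I would address this by treating Type $C_n$ in full detail (it is the cleanest: ${}^L\mf{g}=\mf{so}(2n+1,\bb{C})$, all $a_j''$ even), then deducing Types $B_n$ and $D_n$ from the folding/duality relationship between $\mf{so}_{\mathrm{odd}}$ and $\mf{sp}$, exactly as Proposition \ref{prop:abar} was deduced; this reuse of the Type $C$ argument should keep the case analysis manageable.
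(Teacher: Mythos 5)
Your overall strategy --- unwind Sommers' explicit description of the canonical preimage and match it against the $\theta_i$-generators of $\overline{A}(\mc{O})$ from Proposition \ref{prop:abar} and Section \ref{subsec:generators}, doing one type carefully and noting the others are parallel --- is the same one the paper follows. However, there are two points where your plan is either imprecise or skips what turns out to be the crux of the argument.

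First, your description of Sommers' recipe omits the transpose. You write that one ``collapses the partition of $\mc{O}^\vee$ into a partition of the appropriate type, producing a nilpotent orbit $\mc{O}$ in $\mf{g}$,'' but the partitions of $\mf{g}$ and ${}^L\mf{g}$ have different total sizes, so no direct collapse between them makes sense. What actually happens (and what the paper does) is: first pass to the smallest special orbit $\mc{O}_{sp}^\vee$ dominating $\mc{O}^\vee$ via the expansion of \cite[Section 6.3]{CM}, take its Lusztig--Spaltenstein dual to get $\mc{O}_{sp}$ (equivalently, collapse the \emph{transpose} $(\mc{O}^\vee)^{\bf t}$), and then read $C$ off by comparing $(\mc{O}^\vee)^{\bf t}$ with an honest orbit of the right type, removing rows as needed. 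Your $\pm 1$ ``size-discrepancy shift'' remark gestures at this but doesn't replace it; the transpose is structural, not a bookkeeping adjustment.

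Second, and more seriously, the assertion that ``each such replacement contributes a generator $\theta_i$ to the class $C$'' is where the real work lives, and as stated it is not correct at face value. Sommers' recipe produces $C$ as a product of the natural order-two elements $b_k$ indexed by the removed rows, whereas $\theta_q = b_{a_{2q-1}''}$ and $\theta_i = b_{a_{2i+1}''}b_{a_{2i-1}''}$ for $i<q$: a single removed row gives a single $b_k$, which equals the telescoping product $\theta_{i}\theta_{i+1}\cdots\theta_q$, not $\theta_i$. Showing that the total product of the removed $b_k$'s collapses down to exactly $\prod_{i\in I}\theta_i$ requires tracking, for the smallest $i_1\in I$, whether $i_1+1$ also lies in $I$ (in which case the adjacent pair is already of the correct parity after the first removal and $b_{\rho_{2i_1+1}''+1}$ does not appear), and iterating. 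Your proposal does not anticipate this cancellation; without it, one would naively conclude $C = \theta_{i_1}\cdots\theta_q\cdot\theta_{i_2}\cdots\theta_q\cdots$ rather than $C_I$. If you fill in this iterative argument when you ``treat Type $C_n$ in full detail,'' you will arrive at the paper's proof, but as written the step is a genuine gap rather than a routine verification.
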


%\begin{remark}
%Note that if $I = \phi$ in the above cases, then $\mc{O}_{X,\phi}^{\vee}$ always defines an orbit in ${}^L\mf{g}$ for $X = B$, $C$ or $D$. Moreover, it can be checked directly that it is the Lusztig-Spaltenstein dual of $\mc{O}_X$, and hence a special orbit. According to our Lemma, $(\mc{O}_X, C_{\phi})$ is the canonical preimage of $\mc{O}_{X,\phi}^{\vee}$. This matches with the specified property of canonical preimage in \cite{S2}, stating that the canonical preimage of a special orbit $\mc{O}_{sp}^{\vee}$ is $(\mc{O}_{sp},C_{\phi})$, where $\mc{O}_{sp}$ is the Lusztig-Spaltenstein dual of $\mc{O}_{sp}^{\vee}$ and vice versa.
%\end{remark}

The proof of Lemma \ref{lem:canonical} will be postponed to the next subsection. Assuming the Lemma, we can prove the Achar-Sommers Conjecture:\\

\noindent {\it Proof of Theorem C}.
We present the proof for Type $B_n$. The proofs for orbits of Type $C_n$ and non-very even orbits of Type $D_n$ are similar. Suppose $\mc{O}^{\vee} = \mc{O}_{B,I}^{\vee}$ has Sommers' canonical preimage $(\mc{O}_B,C_I)$, then by Lemma \ref{lem:canonical}, $h^{\vee}$ is equal to
\begin{align*}
&(a_{2q+1}''-2,a_{2q+1}''-4,\dots,1)  \cup \bigcup_{i\in I} (a_{2i}''-1, a_{2i}''-1,\dots,2,2,0) \cup \\
&\bigcup_{j \notin I} (a_{2j}'',a_{2j}''-2,\dots, a_{2j-1}'',a_{2j-1}''-2,a_{2j-1}''-2,\dots,3,3,1,1) \cup \bigcup K_{\mu} \cup \bigcup K_{\nu},
\end{align*}
where $K_{\mu}$, $K_{\nu}$ are as defined in Equation \eqref{eq:Kj}. On the other hand, by Proposition \ref{prop:ROC}, the maximal term appearing in Equation \eqref{eq:rocchar} is given by the maximum of $\Psi(\mc{O}_B,\pi)$ among all $\pi =$ $\chi_q$ $\boxtimes$ $\dots$ $\boxtimes \chi_1$'s satisfying $\chi_i = \mathrm{triv}$ for all $i \in I$. From the calculations in Theorem \ref{prop:psigeneral}, this is equal to
\begin{align*}
&(a_{2q+1}''-2,a_{2q+1}''-4,\dots,1)  \cup \bigcup_{i\in I} B_i' \cup \\
&\bigcup_{j \notin I} (a_{2j}'',a_{2j}''-2,\dots, a_{2j-1}'',a_{2j-1}''-2,a_{2j-1}''-2,\dots,3,3,1,1) \cup \bigcup K_{\mu} \cup \bigcup K_{\nu},
\end{align*}
where $B_i'$ is as defined in Proposition \ref{prop:psinomunu}. However, we have seen from Lemma \ref{lem:canonical} that $a_{2i}'' = a_{2i-1}''$, so $\frac{1}{4}(a_{2i}'' - a_{2i-1}'') \in \mathbb{N}$ and $B_i'$ $=$ $(a_{2i}''-1, a_{2i}''-1,\dots,2,2,0)$. Hence this value is equal to $h^{\vee}$,  and the Theorem is proved for Type $B_n$.

So we are left to show Theorem \ref{conj:AS} holds for the very even orbits $\mc{O}_{I,II}^{\vee}$ of Type $D_n$. In this case, the canonical preimage is $((\mc{O}^{\vee})^{\bf t}_{I,II},C_{\phi})$ if $n$ is even and $((\mc{O}^{\vee})^{\bf t}_{II,I},C_{\phi})$ if $n$ is odd. One can directly compare the description of $\Psi((\mc{O}^{\vee})^{\bf t}_{I,II},\mathrm{triv})$ in Proposition \ref{prop:psinomunu} with the Dynkin element $h_{I,II}^{\vee}$ of $\mc{O}_{I,II}^{\vee}$ to obtain the desired result. Therefore, the theorem is proved. \qed

\begin{example}  \label{eg:CD}
Let $\mc{O} = (4,4,2,2,0)$ be a nilpotent orbit of Type $C_6$. Then $A(\mc{O}) = \overline{A}(\mc{O}) = \bb{Z}/2\bb{Z}$, with generators $\theta_2 = b_4$, $\theta_1 = b_2b_4$. According to Lemma \ref{lem:canonical}, all $(\mc{O},C)$ are canonical preimages of Sommers' surjection map $d$:
\begin{align*}
(\mc{O}, \phi) &\stackrel{d}{\longrightarrow} \mc{O}^{\vee} = [5,3,3,1,1];\ \ \ h^{\vee} = (4,2,2,0) \cup (2,0)\\
(\mc{O}, \theta_2) &\stackrel{d}{\longrightarrow} \mc{O}^{\vee} = [4,4,3,1,1];\ \ \ h^{\vee} = (3,3,1,1) \cup (2,0)\\
(\mc{O}, \theta_1) &\stackrel{d}{\longrightarrow} \mc{O}^{\vee} = [5,3,2,2,1];\ \ \ h^{\vee} = (4,2,2,0) \cup (1,1)\\
(\mc{O}, \theta_2\theta_1) &\stackrel{d}{\longrightarrow} \mc{O}^{\vee} = [4,4,2,2,1];\ \ \ h^{\vee} = (3,3,1,1) \cup (1,1).
\end{align*}
Note that the partitions on the right hand side always define an orbit of Type $B_6$ in terms of rows. One can also verify the above results using Theorem 12 of \cite{S2} -- for example, take $(\mc{O}, \theta_2\theta_1)$ $= (\mc{O}, b_4\cdot b_2b_4)$ $= ([4,4,2,2],b_2)$. Since $b_2$ corresponds to Row $2$ of $\mc{O} = [4,4,2,2]$ of size $4$, $(\mc{O}, \theta_2\theta_1)$ is written as $([4],[4,2,2])$ in the notations of {\it loc. cit.}. In other words, the first partition in the bracket parametrizes $\overline{A}(\mc{O})$. Therefore, the formula in Theorem 12 of {\it loc. cit.} gives $d(\mc{O},\theta_2\theta_1) = (5,4,2,2) = [4,4,2,2,1]$, which is the same as above.

We now look at the orbit covers for each canonical preimage above. By Proposition \ref{prop:ROC},
\begin{align*}
R(\widetilde{\mc{O}}^{C_{\phi}}) &= X_{\mc{O},\mathrm{triv} \boxtimes \mathrm{triv}} \oplus X_{\mc{O},\mathrm{sgn} \boxtimes \mathrm{triv}} \oplus X_{\mc{O},\mathrm{triv} \boxtimes \mathrm{sgn}} \oplus X_{\mc{O},\mathrm{sgn} \boxtimes \mathrm{sgn}}\\
R(\widetilde{\mc{O}}^{C_{2}}) &= X_{\mc{O},\mathrm{triv} \boxtimes \mathrm{triv}} \oplus X_{\mc{O},\mathrm{triv} \boxtimes \mathrm{sgn}} \\
R(\widetilde{\mc{O}}^{C_{1}}) &= X_{\mc{O},\mathrm{triv}} \oplus X_{\mc{O},\mathrm{sgn} \boxtimes \mathrm{triv}}\\
R(\widetilde{\mc{O}}^{C_{2,1}}) &= X_{\mc{O},\mathrm{triv} \boxtimes \mathrm{triv}}
\end{align*}
By Theorem \ref{prop:psigeneral}, $\Psi(\mc{O},\mathrm{triv} \boxtimes \mathrm{triv}) = (3,3,1,1) \cup (1,1)$, $\Psi(\mc{O},\mathrm{sgn} \boxtimes \mathrm{triv}) = (4,2,2,0) \cup (1,1)$, $\Psi(\mc{O},\mathrm{triv} \boxtimes \mathrm{sgn}) = (3,3,1,1) \cup (2,0)$ and $\Psi(\mc{O},\mathrm{sgn} \boxtimes \mathrm{sgn}) = (4,2,2,0) \cup (2,0)$. One can therefore see that the above orbit covers have maximal term equal to the Dynkin element of $\mc{O}^{\vee} =$ $[5,3,3,1,1]$, $[4,4,3,1,1]$, $[5,3,2,2,1]$ and $[4,4,2,2,1]$ respectively. This verifies Theorem C for these $\mc{O}^{\vee}$'s.
\end{example}

%\begin{remark} \label{rmk:typec}
%The original statement of the Achar-Sommers conjecture used a different choice of generators $\theta_i'$ of $\overline{A}(\mc{O})$ for Type $C_n$ orbits. For instance, the generators given in \cite{AS} for the above example are $\theta_2' = b_4$, $\theta_1' = b_2$. Indeed, the conjecture is verified in the above example (and in general) using our generators $\theta_i$.
%\end{remark}

\subsection{Proof of Lemma \ref{lem:canonical}} \label{subsec:lem}
We will prove the Lemma for Type $C_n$, that is, we start with an orbit $\mc{O}^{\vee} = \mc{O}^{\vee} = [r_k, r_{k-1}, \dots, r_1]$ of Type $B_n$. Consider the {\bf $B$-expansion} of $\mc{O}^{\vee}$ in terms of Section 6.3 of \cite{CM}, which gives the smallest special orbit $\mc{O}_{sp}^{\vee}$ above $\mc{O}^{\vee}$. Writing $\mc{O}_{sp}^{\vee}$ as in Proposition \ref{prop:abarrows}, we have
$$\mc{O}_{sp}^{\vee} = [\rho_{2q}'' > \rho_{2q-1}'' \geq \rho_{2q-2}'' > \dots \geq \rho_2'' > \rho_1'' \geq \rho_0''] \cup [\alpha_1, \alpha_1, \dots, \alpha_x, \alpha_x] \cup [\beta_1, \beta_1, \dots, \beta_y, \beta_y]$$
with all $\rho_l''$ being odd integers.

We now specify the discrepancies between $\mc{O}^{\vee}$ and $\mc{O}_{sp}^{\vee}$ -- by Lemma 6.3.9 of \cite{CM}, the discrepancies must occur at some $[\rho_{2i}'' > \rho_{2i-1}'']$ in each of $\mc{O}_{sp}^{\vee}$, that is, upon replacing some of  $[\rho_{2i}'' > \rho_{2i-1}'']$ by $[\rho_{2i}''-1 \geq \rho_{2i-1}''+1]$, we will get back $\mc{O}^{\vee}$. Let $I \subset \{q, \dots, 1\}$ be the subset of all such $i$'s, then
$$\mc{O}^{\vee} = \bigcup_{i \in I}[\rho_{2i}''-1, \rho_{2i-1}''+1] \bigcup_{j \notin I} [\rho_{2j}'', \rho_{2j-1}''] \cup [\rho_0''] \cup [\alpha_1, \alpha_1, \dots, \alpha_x, \alpha_x] \cup [\beta_1, \beta_1, \dots, \beta_y, \beta_y].$$

Now use Proposition 13 of \cite{S2} to compute the canonical preimage of $\mc{O}^{\vee}$. In fact, the canonical preimage must be of the form $(\mc{O}_{sp},C)$, where
$$\mc{O}_{sp} = (\rho_{2q}''-1, \rho_{2q-1}''+1, \rho_{2q-2}''-1, \dots, \rho_2''-1, \rho_1''+1, \rho_0''-1) \cup (\alpha_1, \alpha_1, \dots, \alpha_x, \alpha_x) \cup (\beta_1, \beta_1, \dots, \beta_y, \beta_y)$$
is the Lusztig-Spaltenstein dual of $\mc{O}_{sp}^{\vee}$, and $C$ is a conjugacy class of $\overline{A}(\mc{O}_{sp})$ parametrized by $\theta_q = b_{\rho_{2q-1}''+1}$ and $\theta_i = b_{\rho_{2i+1}''+1}b_{\rho_{2i-1}''+1}$ for $q-1 \geq i \geq 1$. Note that our expression of $\mc{O}_{sp}$ above is compatible with the column expression of special orbits given in Proposition \ref{prop:abar}, with $\alpha$'s and $\beta$'s acting as $\mu$'s and $\nu$'s.

To find what $C$ is, consider the transpose of $\mc{O}^{\vee}$ given by
$$(\mc{O}^{\vee})^{\bf t} = \bigcup_{i \in I}(\rho_{2i}''-1, \rho_{2i-1}''+1) \bigcup_{j \notin I} (\rho_{2j}'', \rho_{2j-1}'') \cup (\rho_0'') \cup (\alpha_1, \alpha_1, \dots, \alpha_x, \alpha_x) \cup (\beta_1, \beta_1, \dots, \beta_y, \beta_y),$$
and check whether $(\mc{O}^{\vee})^{\bf t}$ defines an orbit of Type $B_n$. If it does, then $\mc{O}^{\vee}$ is special (see Proposition \ref{prop:classrows}), and the canonical preimage of $\mc{O}^{\vee}$ is $(\mc{O}_{sp},\phi)$. If not, we remove some distinct even rows from $(\mc{O}^{\vee})^{\bf t}$ so that the remaining parts form an orbit of Type $B_n$, and the removed rows form a partition that determines a conjugacy class $C \subset \overline{A}(\mc{O}_{sp})$ as in Example \ref{eg:CD} above.

Rather than writing down the sizes of the removed rows explicitly, we record the row numbers of $(\mc{O}^{\vee})^{\bf t}$ that are removed. By the discussion at Section \ref{subsec:generators}, if we are to determine $C$ as a conjugacy class of the Lusztig quotient $\overline{A}(\mc{O}_{sp})$, we can ignore the $\alpha$ or $\beta$ columns in $(\mc{O}^{\vee})^{\bf t}$. Moreover, for each Row $\rho_{2i-1}''+1$ or Row $\rho_{2j-1}''$ removed from $(\mc{O}^{\vee})^{\bf t}$ (with $\alpha$ and $\beta$ columns omitted), it contributes a factor $b_{\rho_{2i-1}''+1}$ or $b_{\rho_{2j-1}''+1}$ to $C$.

Omitting the $\alpha$ and $\beta$ columns of $(\mc{O}^{\vee})^{\bf t}$, we now see which rows of $(\mc{O}^{\vee})^{\bf t}$ need to be removed in order to define an orbit of Type $B_n$. By the classification of nilpotent orbits in terms of columns in Proposition \ref{prop:class}, it does not define an orbit of Type $B_n$ precisely when there exists two even columns $(\rho_{2i}''-1, \rho_{2i-1}''+1)$ for some $i \in I$. Let $i_1$ be the smallest integer in $I$, then Row $\rho_{2i_1-1}''+1$ must be removed from $(\mc{O}^{\vee})^{\bf t}$, therefore $b_{\rho_{2i_1-1}''+1} = \theta_{i_1}\theta_{i_1+1}\dots\theta_q$ contributes to $C$.

Now consider the second smallest integer $i_2$ in $I$. If $i_2 = i_1 +1$, then $(\mc{O}^{\vee})^{\bf t}$ contains columns $(\rho_{2i_1+2}''-1, \rho_{2i_1+1}''+1)$ of larger sizes than $(\rho_{2i_1}''-1, \rho_{2i_1-1}''+1)$. After removing Row $\rho_{2i-1}''+1$ from $(\mc{O}^{\vee})^{\bf t}$, the columns $(\rho_{2i_1+2}''-1, \rho_{2i_1+1}''+1)$ become $(\rho_{2i_1+2}''-2, \rho_{2i_1+1}'')$ which are both odd-sized. So $b_{\rho_{2i_1+1}''+1}$ does not contribute to $C$, and $C = \theta_{i_1}\theta_{i_1+1}\prod \theta_j$ for some $j > i_1+1$. On other other hand, if $i_2 > i_1+1$, then $b_{\rho_{2i_1+1}''+1} = \theta_{i_1+1}\dots\theta_q$ contributes to $C$ and $C = \theta_{i_1} \prod \theta_j$ for some $j > i_1+1$. In other words, if $i_1+1 \in I$, then $\theta_{i_1+1}$ shows up in $C$ and vice versa.

One can continue these arguments to conclude that $C = \prod_{i \in I} \theta_i = C_I$, that is, $(\mc{O}_{sp},C_I)$ is the canonical preimage of $\mc{O}^{\vee}$. Then Lemma \ref{lem:canonical} follows directly by replacing $\rho_{2l}''$ with $a_{2l}''+1$ and $\rho_{2l-1}'' $ with $a_{2l-1}'' - 1$ in the above expressions of $\mc{O}^{\vee}$ and $\mc{O}_{sp}$. \qed

%\section{final remarks}

%Following the procedure, the orbit $\mc{O}^{\vee}$ is parametrized as (\mc{O}_{sp}^{\vee},C)$, where $C$ is given by

%Given the results of this manuscript, it is natural to ask whether Conjecture \ref{conj:orbitmethod} holds or whether Conjecture \ref{conj:AS} can be extended for exceptional nilpotent orbits. In an upcoming work, the author will give a generalized version of Conjecture \ref{conj:AS}, and prove the generalized conjecture for all exceptional Richardson orbits. Along the same line, Conjecture \ref{conj:orbitmethod} can be verified for almost all local systems of Richardson orbits.

%\section{Acknowledgements}
%The author learnt the Lusztig-Vogan bijection at the Atlas workshop on Unitary Representations of Real Reductive Groups in July 2013. He would like to thank the University of Utah and the organizers of the workshop for their hospitality. He would also like to thank Cornell University for providing a productive working environment during his visit, where some proofs of an earlier version of this manuscript were corrected and simplified.

\end{document}